\theoremstyle{plain}
\newtheorem{te}{Theorem}
\newtheorem{pr}[te]{Proposition}
\newtheorem{clm}[te]{Claim}
\theoremstyle{remark}
\newtheorem*{re}{Remark}
\newtheorem*{ack*}{Acknowledgment}
\def\y{{\bf y}}\def\b{{\bf b}}\def\n{{\bf n}}\def \bt{{\bf t}}
\def \rt{{\bar{t}}}
\def\tt{{\tilde{\tau}}}
\def\t{{\tau}}
\def\g{{\gamma}}
\def \k{{\kappa}}
\def\m{{\mu}}
\def\th{{\theta}}
\def\tth{{\tilde{\theta}}}
\def \a{{\alpha}}
\def \ta{{\tilde{\alpha}}}
\def\R{{\mathbb R}}
\def\C{{\mathbb C}}
\def\S{{\mathbb S}}
\def\P{{\mathbb P}}
\def\M{{\mathbb M}}
\def\A{{\mathbf A}}
\def\L{{\mathbb L}}
\def\J {{\mathbf J}}
\def\x{{\mathbf x}}
\def\Dec{{\operatorname{Dec}}}
\def\nint{\mathop{\diagup\kern-13.0pt\int}}
\def\Ic{{\mathcal I}}
\def \Rc{{\mathcal R}} 
\def \Dc{{\mathcal D}} \def \Ic{{\mathcal I}}
\def\Cc{{\mathcal C}}\def\Nc{{\mathcal N}}
\def\Tc{{\mathcal T}}
\def\Bc{{\mathcal B}}\def\Rc{{\mathcal R}}
\def\Pc{{\mathcal P}}
\def\Sc{{\mathcal S}}
\def\Lc{{\mathcal L}}
\def\emph#1{{\it #1}}
\begin{document}

		\author{Dominique Kemp}
		\address{Department of Mathematics, Indiana University,  Bloomington IN}
		\email{dekemp@iu.edu}

		\title[Decouplings for surfaces of zero curvature]{Decouplings for surfaces of zero curvature}\maketitle

\begin{abstract}
We extend the $l^2(L^p)$ decoupling theorem of Bourgain-Demeter to the full class of developable surfaces in $\R^3$. This completes the $l^2$ decoupling theory of the zero Gaussian curvature surfaces that lack planar (or umbilic) points. Of central interest to our study is the tangent surface associated to the moment curve.
\end{abstract}
\maketitle

\section{Background and the main result}
\label{s1}

Let $f: \R^n \to\C$. For a set $\tt \subset \R^n$, we shall denote by $f_\tt$ the Fourier restriction of $f$ to $\tt$: 
$$f_\tt (x) = \int_\tt \hat{f}(\xi) e^{2\pi i x\cdot \xi} d\xi.$$ In this context, we say that $f_\tt$ is \emph{Fourier supported} in $\tt$.

In ~\cite{BD3}, the authors proved the following $l^2(L^p)$ decoupling inequality for the $(n-1)$-dimensional compact paraboloid $$\P^{n-1} = \{(\xi_1, \dots, \xi_n) : (\xi_1, \dots, \xi_{n-1}) \in [-1/2,1/2]^{n-1}, \xi_n = \xi_1^2 + \cdots + \xi_{n-1}^2\}$$ with $\delta$-neighborhood given as $$\Nc_\delta(\P^{n-1}) = \{(\xi_1, \dots, \xi_n + v): (\xi_1, \dots, \xi_n) \in \P^{n-1}, v \in [-\delta, \delta]\}.$$

\begin{te}
\label{1}

For each $\delta > 0$, partition $[-1/2,1/2]^{n-1}$ into cubes $\tau$ of side length $\sim \delta^{\frac{1}{2}}$, and let $\Pc_\delta (\P^{n-1})$ be the collection of all curved ``boxes" $(\tau \times \R) \cap \Nc_\delta(\P^{n-1})$. Given any $2 \leq p \leq \frac{2(n+1)}{n-1}$ and any fixed $\epsilon > 0$, if $f$ is Fourier supported in $\Nc_\delta(\P^{n-1})$, then

\begin{equation} \label{one} \|f\|_{L^p(\R^n)} \lesssim_\epsilon \delta^{-\epsilon} (\sum_{\tt \in \Pc_\delta(\P^{n-1})} \|f_{\tt}\|_{L^p(\R^n)}^2)^{\frac{1}{2}}. \end{equation}

The constant in ~\eqref{one} is independent of $f$ and $\delta$. Also, $\Pc_\delta$ is maximal. No non-trivial refinement of it into smaller boxes can be taken for \eqref{one}. 

\end{te}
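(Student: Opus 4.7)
The plan is to follow the Bourgain-Demeter strategy of bootstrapping a multilinear-to-linear reduction via induction on scales. Let $\Dec(\delta, p)$ denote the best constant in~\eqref{one}; the goal is to show $\Dec(\delta, p) \lesssim_\epsilon \delta^{-\epsilon}$ for $2 \le p \le p_n := \frac{2(n+1)}{n-1}$. The basic mechanism enabling the bootstrap is \emph{parabolic rescaling}: the affine symmetry of $\P^{n-1}$ maps a $\delta$-neighborhood of a cap of scale $\sigma \in [\delta^{1/2},1]$ to a $(\delta/\sigma^2)$-neighborhood of the full paraboloid, yielding the submultiplicativity relation $\Dec(\delta, p) \le \Dec(\sigma, p)\cdot \Dec(\delta/\sigma^2, p)$. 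By interpolation with the trivial $p = 2$ case, it suffices to treat the endpoint $p = p_n$.

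The first substantive step is to establish, at the critical exponent $p_n$, a multilinear $l^2$-decoupling estimate with subpolynomial loss: if $\tau_1, \dots, \tau_n$ are caps of some intermediate scale whose normal directions to $\P^{n-1}$ are $\nu$-transverse for a fixed $\nu > 0$, then the $n$-linear average $\|\prod_{i} |f_{\tau_i}|^{1/n}\|_{L^{p_n}}$ decouples into the geometric mean of the $l^2$-square functions over $\delta^{1/2}$-caps inside each $\tau_i$, with constant $\lesssim_\epsilon \delta^{-\epsilon}$. This is derived from the multilinear Kakeya/restriction theorem of Bennett-Carbery-Tao combined with an induction-on-scales argument in the wave-packet picture. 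The exponent $p_n$ is forced here as the scaling-critical index of multilinear Kakeya.

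The most delicate step is the Bourgain-Guth type passage from the multilinear bound to a linear one. Decomposing the physical ball $B_{\delta^{-1}}$ into balls of an intermediate radius, on each such ball one dichotomizes the caps that contribute significantly to $\|f\|_{L^{p_n}}$: either $n$ of them exhibit $\nu$-transversality, in which case the multilinear estimate above is invoked directly, or all significant caps lie in a thin neighborhood of some affine hyperplane, to which a lower-dimensional $l^2$-decoupling (handled by induction on $n$, after a suitable rescaling) applies. Combining these two alternatives with parabolic rescaling produces a self-improving recursion of the schematic form
\[
\Dec(\delta, p_n) \le C_\epsilon\, \delta^{-\epsilon}\, \Dec(\delta^{1/2}, p_n)^{\theta}\, \Dec(\delta, p_n)^{1-\theta}
\]
for some $\theta \in (0,1)$. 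Iterating across $O(\log \log \delta^{-1})$ intermediate scales absorbs the cumulative $\epsilon$-losses into a single $\delta^{-\epsilon}$ and closes the bootstrap.

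The principal obstacle I anticipate is precisely this bookkeeping: the $\delta^{-\epsilon}$ losses incurred at each iteration of the multilinear-to-linear reduction must be kept from compounding, which forces one to choose intermediate scales and iteration counts carefully and to track the dependence on $\epsilon$ through the nested inductions on both $\delta$ and the dimension $n$. As for sharpness, the exponent $p_n$ is sharp by the Knapp example concentrated on a single $\delta^{1/2}$-cap, and the impossibility of refining $\Pc_\delta$ into smaller boxes follows from a random-signs / Khinchine argument applied to wave packets supported on these Knapp caps.
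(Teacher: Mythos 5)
The paper does not prove Theorem~\ref{1}; it is quoted as a black box from \cite{BD3} and used as an input (e.g.\ via Theorem~\ref{3} and parabolic rescaling). Your outline is a faithful high-level summary of the Bourgain--Demeter argument in \cite{BD3} — parabolic rescaling and submultiplicativity, reduction to the critical exponent by interpolation with $p=2$, multilinear $l^2$ decoupling via Bennett--Carbery--Tao, the Bourgain--Guth transverse-vs.-lower-dimensional dichotomy, and the induction-on-scales bootstrap — so there is no conflict with the paper; you have simply reproduced the strategy of the cited reference rather than anything in the present manuscript. One small caveat: the schematic recursion you write, $\Dec(\delta,p_n)\le C_\epsilon\delta^{-\epsilon}\Dec(\delta^{1/2},p_n)^\theta\Dec(\delta,p_n)^{1-\theta}$, is vacuous as stated (take $\theta=0$); the actual bootstrap in \cite{BD3} is phrased in terms of the optimal exponent $\eta(p)$ with $\Dec(\delta,p)\lesssim_\epsilon\delta^{-\eta(p)-\epsilon}$ and shows that $\eta$ satisfies a strict self-improving inequality forcing $\eta(p_n)=0$. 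On sharpness, the paper addresses the analogous claim for its own theorems in Section~\ref{s8} via the flatness criterion of Proposition~\ref{8.6}, which is the cleaner formulation of the Knapp/random-signs heuristic you invoke.
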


For the above inequality, it is not essential to take the domain of the graphing function for $\P^{n-1}$ as $[-1/2, 1/2]^{n-1}$. Using the well-known method of \emph{parabolic rescaling}, we may expand the domain by any fixed inverse power of $\delta$. We shall utilize this fact in Section \ref{s6}. 

In \cite{BD3}, the authors extend Theorem ~\ref{1} to compact hypersurfaces having positive principal curvatures everywhere. They are able there also to obtain the optimal $l^2(L^p)$ decoupling inequality for the cone in all dimensions. As well, the general case of compact hypersurfaces with nonzero Gaussian curvature was concluded in \cite{BD4} by Bourgain and Demeter.

What therefore remains is the case in which the hypersurface has zero Gaussian curvature at some or all of its points. As an initial step, Bourgain, Demeter, and the current author considered the real analytic surfaces of revolution in $\R^3$ in \cite{BDK}. In the current paper, we extend attention to surfaces in $\R^3$ having zero Gaussian curvature everywhere and no planar points (points at which both principal curvatures are zero). It is known that the zero curvature surfaces in $\R^3$ without planar points are the cylinders and cones extending over planar curves and also the tangent surfaces (as shown in Section 3-5 of \cite{DoC}). The tangent surface associated to a non-planar $C^2$ curve $\phi: I \rightarrow \R^3$ is defined to be $$\{\phi(t) + s\phi'(t): t \in I, s \in \R^+\}.$$

In this paper, we shall obtain the optimal decoupling inequality for compact $C^4$ tangent surfaces, thus completing the decoupling theory for the smooth, non-planar surfaces in $\R^3$ with zero Gaussian curvature.

The initial step is to prove $l^2(L^p)$ decoupling for the tangent surface associated to the moment curve \begin{equation} \label{momc} \phi(t) = (t, t^2, t^3). \end{equation} We shall label that surface by $\M$, call it the \emph{moment surface}
\begin{equation} \label{moms} \M = \{\phi(t) + s\phi'(t): t \in [-1/2,1/2], s \in [0,2]\}, \end{equation} and use the following parametrization for it
\begin{equation} \label{momp}
\x(t,s) = (t + s, t^2 + 2ts, t^3 + 3t^2s). \end{equation}

For our decoupling below, we shall need first to decompose $\M$ into $\log(1/\delta)$-many ``annuli" as follows. Define $A = \x([-1/2,1/2] \times [0, \delta^{1/3}])$ and $ A_k = \x([-1/2,1/2] \times [2^{-k}, 2^{-k+1}))$ for each $2^{-k} \geq \delta^{1/3}$. Since we can afford $O(1)$ losses of $(\log{1/\delta})$, we may apply the triangle inequality and H\"older's inequality to obtain:

\begin{equation} \label{five} \|f\|_{L^p(\R^3)} \leq (\log{1/\delta})^{1/2}(\|f_{\Nc_\delta(A)}\|_{L^p(\R^3)}^2 + \sum_{2^{-k} \geq \delta^{1/3}} \|f_{\Nc_\delta(A_k)}\|_{L^p(\R^3)}^2)^{1/2}. \end{equation}

Inequality \eqref{five} is our starting point for deriving:

\begin{te}{(Moment surface decoupling)}\label{4} For each $\delta > 0$ and each $k$ satisfying $\delta^{1/3} \leq 2^{-k} \leq 1$, let $\tau_k$ denote subintervals of length $\sim (2^k\delta)^{1/2}$ that partition $[-1/2,1/2]$. In turn, $\tt_k$ will denote the $\delta$-neighborhoods of the images of $\x$ on the sets $\t_k \times [2^{-k}, 2^{-k+1})$. As well, $\t$ will denote the intervals of length $\sim \delta^{1/3}$ that partition $[-1/2,1/2]$, with $\tt$ defined similarly as above.

Let $2 \leq p \leq 6$. For every $\epsilon > 0$, any $f$ whose Fourier support lies in $\Nc_\delta(\M)$ satisfies 
\begin{equation} \label{four} \|f\|_{L^p(\R^3)} \lesssim_\epsilon \delta^{-\epsilon} (\sum_{\tt \in \Pc_\delta(A)} \|f_\tt\|_{L^p(\R^3)}^2+ \sum_{2^{-k} \geq \delta^{1/3}}\sum_{\tt_k \in \Pc_\delta(A_k)} \|f_{\tt_k}\|_{L^p(\R^3)}^2)^{\frac{1}{2}}. \end{equation} 

\end{te}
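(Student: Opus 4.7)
The plan is to combine (5) with an $l^2(L^p)$ decoupling proved separately on each annulus $A_k$ and on the base region $A$. Since there are $O(\log 1/\delta)$ annuli, it suffices to establish, for every $k$ with $\delta^{1/3} \leq 2^{-k} \leq 1$,
\begin{equation*}
\|f_{\Nc_\delta(A_k)}\|_{L^p(\R^3)} \lesssim_\epsilon \delta^{-\epsilon} \Bigl(\sum_{\tt_k \in \Pc_\delta(A_k)} \|f_{\tt_k}\|_{L^p(\R^3)}^2 \Bigr)^{1/2} \qquad (2 \leq p \leq 6),
\end{equation*}
and the analogous estimate on $A$; the logarithmic loss in (5) is absorbed by $\delta^{-\epsilon}$.

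The first step on each annulus is an anisotropic rescaling exploiting the homogeneity of $\phi(t) = (t,t^2,t^3)$. The linear map $L_\lambda(x_1,x_2,x_3) = (\lambda x_1, \lambda^2 x_2, \lambda^3 x_3)$ satisfies $L_\lambda \x(t,s) = \x(\lambda t, \lambda s)$, so taking $\lambda = 2^k$ carries $A_k$ onto a ``thick'' reference annulus $\tilde A = \x([-2^{k-1}, 2^{k-1}] \times [1,2))$. The isotropic $\delta$-neighborhood of $A_k$ is transported to an anisotropic neighborhood of $\tilde A$; in the frame of the tangent, generator, and normal directions of $\tilde A$, this is comparable to a standard isotropic neighborhood at a rescaled thickness. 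After a further $O(1)$ localization in $t$, the caps $\tt_k$ match, up to uniform constants, the natural caps on a fixed compact piece of the thick moment surface, and the task becomes a decoupling at this rescaled thickness.

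On this thick piece, the transverse-to-generator principal curvature of $\M$ is of order one while the curvature along generators vanishes, which is the regime where a cylinder/cone-type decoupling should yield the full $l^2(L^p)$ range $p \leq 6$. The Gauss normal field $N(t) = (-3t^2, 3t, -1)$ lies in the affine plane $x_3 = -1$ and traces a parabola there, matching the Gauss image of a parabolic cylinder. I plan to subdivide the thick piece in $t$ into sub-pieces on which the generators are approximately parallel, model each sub-piece by a genuine parabolic cylinder, apply the parabola decoupling of \cite{BD3} in $\R^2$ (Theorem \ref{1} with $n = 2$, giving the sharp $p \leq 6$) together with Fubini in the generator direction, and reassemble the sub-pieces by pigeonholing. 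The innermost annulus $A$ is handled analogously: after rescaling, $A$ becomes a thickened neighborhood of the moment curve at scale $\delta^{1/3}$, and the corresponding $l^2(L^p)$ moment-curve decoupling closes the estimate for $p \leq 6$.

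The main obstacle is making the cylindrical reduction rigorous on each thick sub-piece. Since the generators of $\M$ are not parallel, every sub-piece is only approximately a cylinder, and the $O(1)$-twist of the generator direction across the thick annulus must be controlled at the decoupling scale $\delta$. I expect this to require an induction-on-scales argument in the spirit of Bourgain--Demeter, with the subdivision in $t$ refined until the cylindrical model is accurate at the current scale, followed by a careful pigeonholing to match the resulting finer caps with the caps $\tt_k$ prescribed by the theorem statement.
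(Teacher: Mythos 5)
Your reduction via the annulus decomposition \eqref{five} and the anisotropic rescaling $L_\lambda(x_1,x_2,x_3) = (\lambda x_1, \lambda^2 x_2, \lambda^3 x_3)$ carrying each $A_k$ onto a thick reference annulus is exactly the paper's reduction in Section~\ref{s6}. The serious gap is in how you propose to decouple that thick reference annulus: you model sub-pieces by genuine parabolic cylinders, but the tangent surface over the thick annulus is approximately a \emph{cone}, and the distinction is quantitatively decisive. Writing $\xi_1 = t+s$, $\xi_2 = t^2+2ts$, $\xi_3 = t^3+3t^2s$, one finds $\xi_3 = \tfrac{3}{2}\xi_2^2/\xi_1 + O(\xi_2^3)$ with $|\xi_2|\sim|t|$ (see \eqref{tapp}, \eqref{sim}), so over a $t$-interval of length $\sigma$ the cone approximation holds to thickness $\sim\sigma^3$. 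A cylinder model (a fixed parabolic cross-section extruded along a fixed generator direction) only holds to thickness $\sim\sigma^2$: the generator direction twists by $O(\sigma)$ across the interval, and the cross-sectional parabolas $\tfrac{3}{2}\xi_2^2/\xi_1$ at the two ends of the range $\xi_1\in[1/4,10]$ already differ by $O(\sigma^2)$ for $|\xi_2|\le\sigma$. Cylinder decoupling at thickness $\sigma^2$ returns angular caps of size $\sigma$, so the induction on scales you sketch never improves the cap size and cannot close.

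The paper's proof (Section~\ref{s5}) instead iterates cone decoupling (Theorem~\ref{10}) on the thick annulus: with $t$-intervals of length $\sigma_j = (1/2)^{(3/2)^j}$, the $\sigma_j^3$-cone approximation yields after one cone decoupling caps of length $\sigma_j^{3/2}=\sigma_{j+1}$, so $O(\log\log(1/\delta))$ iterations reach the target scale with a net constant of the form $C_\epsilon\delta^{-O(\epsilon)}$. Each iterate must be re-centered in $t$, which is justified by the translation invariance of the moment-surface decoupling constant established in Section~\ref{s4}; your proposal omits this ingredient, and without it the iterated decoupling cannot be restarted at the new, smaller $t$-interval. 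Finally, your treatment of the innermost region $A$ also diverges from the paper: there the argument is the direct inclusion $\Nc_\delta(A)\subset\Nc_{\delta^{2/3}}(\mathcal{C}yl)$ (from $s^2\le\delta^{2/3}$ when $s\le\delta^{1/3}$), followed by cylinder decoupling at the coarser thickness $\delta^{2/3}$ to produce $t$-intervals of length $\delta^{1/3}$; no moment-curve decoupling is invoked, nor is it clear one would yield the stated caps.
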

\pagebreak

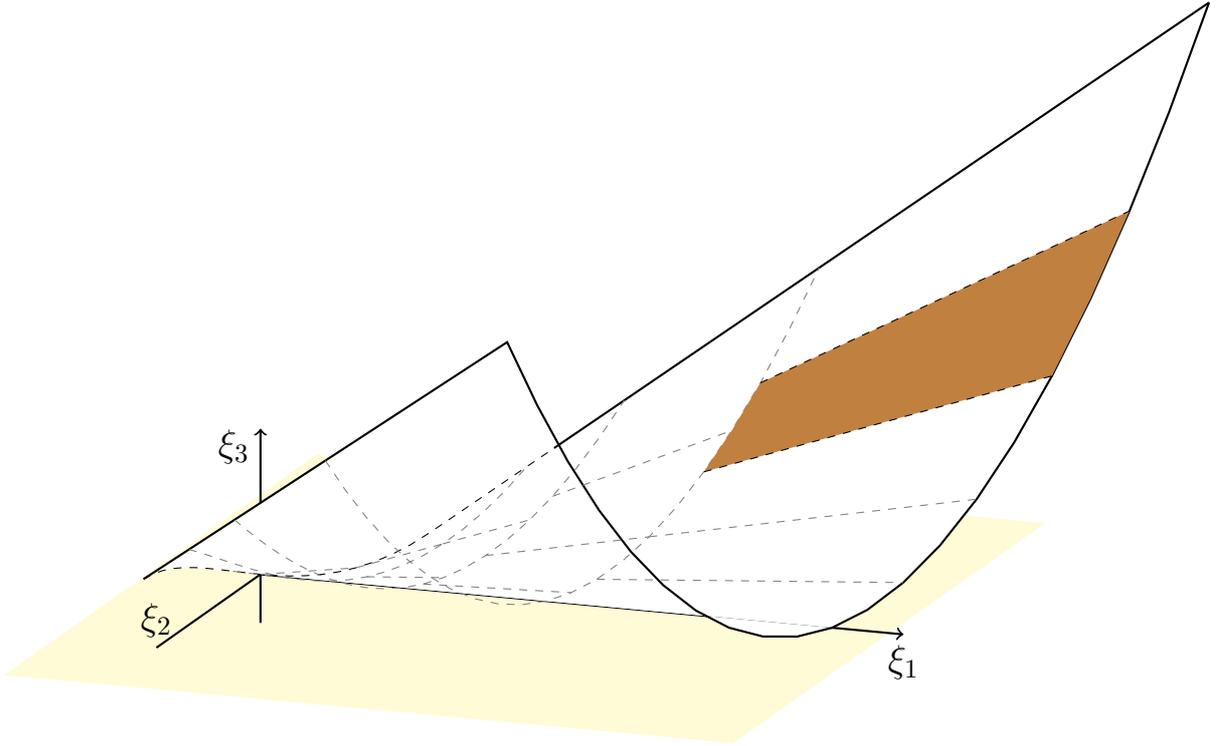
\begin{figure}
\begin{center}
\tdplotsetmaincoords{75}{20}

\begin{tikzpicture}[tdplot_main_coords, scale=4]

	\begin{scope}


     \fill[fill=yellow!20] (-0.35,-1.5, 0) --  (-0.35, 1.5, 0) --  (2.2, 1.5, 0) --  (2.2,-1.5, 0) -- cycle;


   \draw[black, thick, ->] 
(0,0,0) -- (9/4,0,0)  
    ;
   \draw[black, thick] 
(0,-1,0) -- (0,0,0)  ;

   \draw[black, thick, ->] 
(0,0,-1/6) -- (0,0,1/2)  
    ;

\fill[fill=white]
({-0.5}, {0.25}, {-0.125})
\foreach \t in {-0.5, -0.45, ..., 0}
{
--({\t}, {\t*\t}, {\t*\t*\t})
}
--({0}, {0}, {0}) -- ({2}, {0}, {0})
\foreach \t in {0, -0.05, ..., -0.5}
{
--({\t + 2}, {\t*\t + 2*\t*2}, {\t*\t*\t + 3*\t*\t*2})
}
--({-0.5+2}, {0.25 -2*0.5*2}, {-0.125 + 3*0.25*2}) -- ({-0.5}, {0.25}, {-0.125});

\fill[fill=white]
({0.5}, {0.25}, {0.125})
\foreach \t in {0.5, 0.45, ..., 0}
{
--({\t}, {\t*\t}, {\t*\t*\t})
}
--({0}, {0}, {0}) -- ({2}, {0}, {0})
\foreach \t in {0, 0.05, ..., 0.5}
{
--({\t + 2}, {\t*\t + 2*\t*2}, {\t*\t*\t + 3*\t*\t*2})
}
--({0.5+2}, {0.25 + 2*0.5*2}, {0.125+ 3*0.25*2}) -- ({0.5}, {0.25}, {0.125});

\draw[black, dashed]
({-0.5}, {0.25}, {-0.125})
\foreach \t in {-0.5, -0.45, ..., 0.5}
{
--({\t}, {\t*\t}, {\t*\t*\t})
}
--({0.5}, {0.25}, {0.125}) -- ({0.5+0.32}, {0.25 + 2*0.5*0.32}, {0.125 + 3*0.25*0.32});

\draw[black, thick]
 ({0.5+0.32}, {0.25 + 2*0.5*0.32}, {0.125 + 3*0.25*0.32}) -- ({0.5 + 2}, {0.25 + 2*0.5*2}, {0.125 + 3*0.25*2});

\draw[black, thick]
({0.5+2}, {0.25+2*0.5*2}, {0.125+3*0.25*2})
\foreach \t in {0.5, 0.45, ..., -0.5}
{
--({\t + 2}, {\t*\t + 2*\t*2}, {\t*\t*\t + 3*\t*\t*2})
}
--({-0.5 + 2}, {0.25 - 2*0.5*2}, {-0.125 + 3*0.25*2}) -- ({-0.5}, {0.25}, {-0.125});

\draw[gray, dashed]
({-0.5+1}, {0.25-2*0.5}, {-0.125 + 3*0.25})
\foreach \t in {-0.5, -0.45, ..., 0.5}
{
--({\t + 1}, {\t*\t + 2*\t}, {\t*\t*\t + 3*\t*\t})
}
--({0.5+1}, {0.25 + 2*0.5}, {0.125 + 3*0.25});

\draw[gray, dashed]
({-0.5+0.5}, {0.25-2*0.5*0.5}, {-0.125 + 3*0.25*0.5})
\foreach \t in {-0.5, -0.45, ..., 0.5}
{
--({\t + 0.5}, {\t*\t + 2*\t*0.5}, {\t*\t*\t + 3*\t*\t*0.5})
}
--({0.5+0.5}, {0.25 + 2*0.5*0.5}, {0.125 + 3*0.25*0.5});

\draw[gray, dashed]
({-0.5+0.25}, {0.25 - 2*0.5*0.25}, {-0.125 + 3*0.25*0.25})
\foreach \t in {-0.5, -0.45, ..., 0.5}
{
--({\t + 0.25}, {\t*\t + 2*\t*0.25}, {\t*\t*\t + 3*\t*\t*0.25})
}
--({0.5 + 0.25}, {0.25 + 2*0.5*0.25}, {0.125 + 3*0.25*0.25});

\draw[gray, dashed]
--({0.3}, {0.09}, {0.027})--({0.3 + 0.25}, {0.09 + 2*0.3*0.25}, {0.027 + 3*0.09*0.25});
\draw[gray, dashed]
({-0.1}, {0.01}, {-0.001}) -- ({-0.1 + 0.25}, {0.01 - 2*0.1*0.25}, {-0.001 + 3*0.01*0.25});

\draw[gray, dashed]
({0.3 + 0.25}, {0.09+2*0.3*0.25}, {0.027+3*0.09*0.25})--({0.3+0.5}, {0.09 + 2*0.3*0.5}, {0.027 + 3*0.09*0.5});

\draw[gray, dashed]
({0.1 + 0.25}, {0.01 + 2*0.1*0.25}, {0.001 + 3*0.01*0.25})--({0.1+0.5}, {0.01 + 2*0.1*0.5}, {0.001 + 3*0.01*0.5});

\draw[gray, dashed]
({0.35+0.5}, {0.35*0.35 + 2*0.35*0.5}, {0.35*0.35*0.35 + 3*0.35*0.35*0.5}) -- ({0.35+1}, {0.35*0.35 + 2*0.35*1}, {0.35*0.35*0.35 + 3*0.35*0.35*1});

\draw[gray, dashed]
({0.2+0.5}, {0.2*0.2 + 2*0.2*0.5}, {0.2*0.2*0.2 + 3*0.2*0.2*0.5}) -- ({0.2+1}, {0.2*0.2 + 2*0.2*1}, {0.2*0.2*0.2 + 3*0.2*0.2*1});

\draw[gray, dashed]
({0.05+0.5}, {0.05*0.05 + 2*0.05*0.5}, {0.05*0.05*0.05 + 3*0.05*0.05*0.5}) -- ({0.05+1}, {0.05*0.05 + 2*0.05*1}, {0.05*0.05*0.05 + 3*0.05*0.05*1});

\draw[gray, dashed]
({0.2 + 1}, {0.2*0.2 + 2*0.2}, {0.2*0.2*0.2 + 3*0.2*0.2}) -- ({0.2 + 2}, {0.2*0.2 + 2*0.2*2}, {0.2*0.2*0.2 + 3*0.2*0.2*2});

\draw[gray, dashed]
({0.1 + 1}, {0.1*0.1 + 2*0.1}, {0.1*0.1*0.1 + 3*0.1*0.1}) -- ({0.1 + 2}, {0.1*0.1 + 2*0.1*2}, {0.1*0.1*0.1 + 3*0.1*0.1*2});

\fill[fill=brown]
({0.3+1}, {0.09 + 2*0.3}, {0.027 + 3*0.09})
\foreach \t in {0.3, 0.35, 0.4}
{
--({\t + 1}, {\t*\t + 2*\t}, {\t*\t*\t + 3*\t*\t})
}
--({0.4+1}, {0.16 + 2*0.4}, {0.064 + 3*0.16}) -- ({0.4+2}, {0.16 + 2*0.4*2}, {0.064 + 3*0.16*2})
\foreach \t in {0.4, 0.35, 0.3}
{
--({\t + 2}, {\t*\t + 2*\t*2}, {\t*\t*\t + 3*\t*\t*2})
}
--({0.3+2}, {0.09 + 2*0.3*2}, {0.027 + 3*0.09*2}) -- ({0.3 + 1}, {0.09 + 2*0.3}, {0.027 + 3*0.09});

\draw[black, dashed]
({0.4 + 1}, {0.4*0.4 + 2*0.4}, {0.4*0.4*0.4 + 3*0.4*0.4}) -- ({0.4 + 2}, {0.4*0.4 + 2*0.4*2}, {0.4*0.4*0.4 + 3*0.4*0.4*2});

\draw[black, dashed]
({0.3 + 1}, {0.3*0.3 + 2*0.3}, {0.3*0.3*0.3 + 3*0.3*0.3}) -- ({0.3 + 2}, {0.3*0.3 + 2*0.3*2}, {0.3*0.3*0.3 + 3*0.3*0.3*2});

\node[above] at  ({0}, {-1}, {0}) {\large $\xi_2$};		
\node[below] at  ({9/4}, {0}, {0}) {\large $\xi_1$};	
\node[left] at  ({0}, {0}, {7/16}) {\large $\xi_3$};

		\end{scope}
			\end{tikzpicture}
\caption{$\M$ is shown along with the decoupling partition for $\delta^{1/3} = 1/4$. The projection of a $\tt$ onto $\M$ is shaded in brown.}
\end{center}
\end{figure}

\begin{re} Corresponding to the comment made prior to Theorem \ref{1}, we note here as well that the $t$-domain of $\x$ in Theorem \ref{4} may be extended by arbitrary inverse powers of $\delta$. Indeed, $\M$ with full domain for the $t$-variable projects onto the $(\xi_1, \xi_2)$-plane as a subset of the neighborhood with width $4$ of a translation of $\P^1$. Therefore, Theorem \ref{3} below enables us to decouple the $t$-domain ultimately into intervals of length $1$, at the expense of a constant having the form $C_\epsilon \delta^{-D\epsilon}$. Then, Section \ref{s4} implies that Theorem \ref{4} completes the argument. \end{re}

For the proof of Theorem ~\ref{4}, we shall need the following two theorems, which are essentially proven in ~\cite{BD3}. In particular, the proof of Theorem \ref{3} is a direct application of Fubini's Theorem and Minkowski's inequality.

\begin{te}{(Cylinder decoupling)}\label{3}

Let $\mathcal{C}yl = \P^1 \times \R$. We define the $\delta$-neighborhood of $\Cc yl$ $$\Nc_\delta(\Cc yl) = \Nc_\delta(\P^1) \times \R$$ Let $\th$ denote the elements of a partition of $[-1/2,1/2]$ into intervals of length $\sim \delta^{\frac{1}{2}}$, and let $\Pc_\delta(\Cc yl)$ be the collection of all $\tth = (\th \times \R^2) \cap \Nc_\delta(\Cc yl)$. 

Let $2 \leq p \leq 6$. For each $\epsilon > 0$ and for every $f$ that is Fourier supported in $\Nc_\delta(\Cc yl)$, 

\begin{equation} \label{three}
\|f\|_{L^p(\R^3)} \lesssim_\epsilon \delta^{-\epsilon} (\sum_{\tth \in \Pc_\delta(\Cc yl)} \|f_\tth\|_{L^p(\R^3)}^2)^{\frac{1}{2}}.
\end{equation}
\end{te}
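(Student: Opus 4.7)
The plan is to reduce the three-dimensional cylinder decoupling to the two-dimensional parabola decoupling (i.e., Theorem \ref{1} in the case $n=2$, which gives the range $2 \le p \le 6$) by slicing in the $x_3$ variable. The approach relies on two standard ingredients: Fubini's theorem to perform the slicing, and Minkowski's integral inequality to interchange an $L^p$ norm with an $\ell^2$ sum when $p \ge 2$.

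I would proceed as follows. For each fixed $x_3 \in \R$, set $g_{x_3}(x_1,x_2) := f(x_1,x_2,x_3)$ and observe that, since $\hat f$ is supported in $\Nc_\delta(\P^1) \times \R$, the Fourier transform of $g_{x_3}$ in the first two variables is supported in $\Nc_\delta(\P^1) \subset \R^2$. Applying Theorem \ref{1} in dimension $n=2$ to $g_{x_3}$ yields, for every $2 \le p \le 6$ and every $\epsilon>0$,
\begin{equation*}
\|g_{x_3}\|_{L^p(\R^2)} \lesssim_\epsilon \delta^{-\epsilon}\Bigl(\sum_{\th}\|(g_{x_3})_\th\|_{L^p(\R^2)}^2\Bigr)^{1/2},
\end{equation*}
where $(g_{x_3})_\th$ denotes the Fourier restriction of $g_{x_3}$ to $\th \times \R \cap \Nc_\delta(\P^1)$. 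Crucially, $(g_{x_3})_\th(x_1,x_2)$ is exactly $f_{\tth}(x_1,x_2,x_3)$, since Fourier restriction in the first two coordinates to $\th \times \R$ is the same as restricting the full transform to $\th \times \R \times \R = \tth$.

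Next, by Fubini's theorem $\|f\|_{L^p(\R^3)}^p = \int_\R \|g_{x_3}\|_{L^p(\R^2)}^p\,dx_3$. Raising the slicewise estimate to the $p$-th power and integrating in $x_3$ gives
\begin{equation*}
\|f\|_{L^p(\R^3)}^p \lesssim_\epsilon \delta^{-\epsilon p} \int_\R \Bigl(\sum_\th \|f_{\tth}(\cdot,x_3)\|_{L^p(\R^2)}^2\Bigr)^{p/2}\,dx_3.
\end{equation*}
Since $p \ge 2$, Minkowski's inequality for the $\ell^2$-valued integrand (with exponent $p/2 \ge 1$) yields
\begin{equation*}
\Bigl(\int_\R \Bigl(\sum_\th \|f_{\tth}(\cdot,x_3)\|_{L^p(\R^2)}^2\Bigr)^{p/2}dx_3\Bigr)^{2/p} \le \sum_\th \Bigl(\int_\R \|f_{\tth}(\cdot,x_3)\|_{L^p(\R^2)}^p dx_3\Bigr)^{2/p} = \sum_\th \|f_{\tth}\|_{L^p(\R^3)}^2,
\end{equation*}
using Fubini again in the last equality. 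Combining the two displays gives \eqref{three}.

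There is no real obstacle: every step is forced. The only point that requires a brief check is that the restriction of $\hat f$ to $\tth$ commutes with the $x_3$-slicing, which follows immediately from the product structure of $\tth$ in the $\xi_3$ direction. The restriction $p \ge 2$ is precisely what is needed for Minkowski's inequality to go in the correct direction, and the upper endpoint $p = 6$ comes directly from the two-dimensional paraboloid decoupling.
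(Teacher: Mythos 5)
Your proof is correct and is exactly the route the paper takes: the paper states that Theorem~\ref{3} follows from the two-dimensional parabola decoupling by a direct application of Fubini's theorem and Minkowski's inequality, which is precisely the slicing-in-$x_3$ argument you carry out.
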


Decoupling for the cone can be described in a variety of ways, but we will take the following perspective that involves rotating the cone $$\xi_3 = |(\xi_1,\xi_2)|$$ to the (compact) cone $\Cc$ $$\qquad \xi_3 = \frac{\xi_2^2}{2\xi_1}.$$

\begin{te} {(Cone decoupling, \cite {BD3})} \label{10} Let $\th$ be as in Theorem \ref{3}. Let $\L_\th = \{\g(t+1, t^2+2t): t \in \th; \g \in [1,2]\}$. We define $\Pc_\delta(\Cc)$ to be the collection of all sets $\tth=(\L_\th \times \R) \cap \Nc_\delta(\Cc)$. For each $2 \leq p \leq 6$, \begin{equation} \label{ten} \|f\|_{L^p} \lesssim_\epsilon \delta^{-\epsilon} (\sum_{\tth \in \Pc_\delta(\Cc)} \|f_\tth\|_{L^p}^2)^{1/2}\end{equation} for all $f$ that are Fourier supported in $\bigcup_\th \tth.$\end{te}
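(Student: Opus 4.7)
The plan is to follow the cone decoupling argument of Bourgain--Demeter \cite{BD3}, which proceeds by induction on $\delta$ driven by the dilation symmetry of $\Cc$. The key geometric observation is that for each $\th = [t_0 - c\delta^{1/2}, t_0 + c\delta^{1/2}]$, one can write down an invertible affine map $T_\th \colon \R^3 \to \R^3$, namely a shear sending the tangent direction of $\L_\th$ at $t = t_0$, $\g = 1$ onto a coordinate axis, followed by an anisotropic dilation with factors $\sim (1, \delta^{-1/2}, \delta^{-1})$, such that $T_\th(\tth)$ is a unit-scale plate of a model cone of the same shape as $\Cc$. Dilation invariance of $\Cc$ then guarantees that any sub-plate $\tth' \subset \tth$ at a finer scale $\delta' < \delta$ maps to a plate of the model cone of angular width $\sim (\delta'/\delta)^{1/2}$, which is exactly what is needed for an iterative rescaling.

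With the rescaling in hand, the remainder of the argument is a bilinear induction on scales. Denote by $D_p(\delta)$ the best constant in \eqref{ten}. Following BD3, I would reduce $D_p(\delta)$, up to polynomial losses, to a bilinear $L^p$ estimate for pairs of transverse plates $\tth_1, \tth_2$. Such a bilinear estimate comes from the multilinear Kakeya inequality of Bennett--Carbery--Tao in $\R^3$. The induction step then alternates: apply the inductive bound at an intermediate scale $\delta_0$, rescale each intermediate plate $\tth_0$ back to unit scale via $T_{\th_0}$, and close the remaining finer decoupling using the bilinear input. This yields the subpolynomial bound $D_p(\delta) \lesssim_\epsilon \delta^{-\epsilon}$ throughout $2 \le p \le 6$; the critical case $p = 6$ carries the technical weight, and the intermediate $p$ follow by interpolation with the trivial $p = 2$ bound.

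The main obstacle is producing the bilinear estimate. On any single sector $\L_\th$ the cone $\Cc$ is flat in the radial direction $\g$, so quantitative curvature can only be extracted by pairing two sectors whose angular tangent directions are transverse; the two angular curvatures together with the two transverse flat radial directions then span $\R^3$ in a uniformly quantitative way, which is precisely the hypothesis required by multilinear Kakeya. Once this bilinear input is in hand, the induction-on-scales bookkeeping and the affine rescaling are standard, consistent with the author's remark that Theorem \ref{10} is ``essentially proven in BD3''.
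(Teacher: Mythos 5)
Your proposal and the paper take fundamentally different routes. The paper does not re-prove cone decoupling at all: it cites the cone theorem of Bourgain--Demeter as a black box, and the only new content in Theorem \ref{10} is a change of the decoupling partition, from the plates $\Pc'_\delta$ used in \cite{BD3} to the sets $\tth = (\L_\th \times \R)\cap \Nc_\delta(\Cc)$ used here. As the remark following the statement explains, passing between the two partitions amounts to the elementary observation that each element of one partition intersects $O(1)$ elements of the other, after which standard Fourier projection arguments transfer the decoupling inequality; this is the same device used in Section \ref{s5} for the cylinder. Your proposal instead re-derives the entire Bourgain--Demeter cone theorem from scratch, which is far more than the paper needs, and it misses the one point the paper actually addresses, namely the partition comparison.

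Beyond the mismatch in scope, your description of the \cite{BD3} argument for the cone is not accurate. Bourgain and Demeter do not obtain cone decoupling by running a bilinear or multilinear Kakeya argument directly on pairs of transverse cone sectors; precisely because the cone is flat in the radial direction, it does not fall under the hypotheses of their positive-curvature machinery. Instead, they prove cone decoupling by a separate iteration that reduces matters to decoupling for cylinders over the parabola (their already-established elliptic paraboloid theorem applied to $\P^1\times\R$), exploiting the fact that a narrow cone sector at angular scale $\sigma$ is, after an affine map, contained in a thin cylinder over a parabola. Your ``bilinear estimate for transverse plates via multilinear Kakeya'' step would require a genuinely new quantitative argument to control the degeneracy from the flat radial direction, and as written it is a gap; the reduction-to-cylinders route exists exactly to avoid that issue. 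So even as a standalone proof, the proposal has a missing step where the bilinear input is invoked, and as a match to the paper it targets the wrong problem.
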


\begin{re} The version of Theorem \ref{10} proven in \cite{BD3} obtains a different decoupling partition $\Pc'_\delta$ for the $\delta$-neighborhood of $\Cc$. As discussed briefly in Section \ref{s5}, we may derive the partition $\Pc_\delta$ given above if every element in $\Pc_\delta$ intersects $O(1)$ many elements in $\Pc'_\delta$ and also conversely every element in $\Pc_\delta'$ intersects $O(1)$ many elements in $\Pc_\delta$.  \end{re}

\begin{figure}
\begin{center}
\tdplotsetmaincoords{80}{195}

\begin{tikzpicture}[tdplot_main_coords, scale=4]

	\begin{scope}


     \fill[fill=yellow!20] (-0.35,-1.5, 0) --  (-0.35, 1.5, 0) --  (2.2, 1.5, 0) --  (2.2,-1.5, 0) -- cycle;


   \draw[black, thick, ->] 
(-1/4,0,0) -- (1/2,0,0)  
    ;
   \draw[black, thick, ->] 
(0,-1,0) -- (0,3/4,0)  
    ;
   \draw[black, thick, ->] 
(0,0,-1/6) -- (0,0,1/2)  
    ;


     \fill[fill=white] 
     ({2},{1}, {1/2})
    \foreach \t in {0.95, 0.9,...,0}
    {
        -- ({2},{\t},{(\t*\t)/2})
    }
        -- ({2},{0},{0}) -- ({1}, {0}, {0})
 \foreach \t in {0,0.05,...,0.55}
    {
        --({1},{\t},{(\t*\t)})
    }
       -- cycle;

     \fill[fill=white] 
     ({2},{-0.2}, {(0.2)*(0.2)/2})
    \foreach \t in {-0.15, -0.1,...,0.1}
    {
        -- ({2},{\t},{(\t*\t)/2})
    }
      -- ({2},{0.1},{0}) -- ({1}, {0.05}, {0})
 \foreach \t in {0.05,0, -0.05,-0.1}
    {
        --({1},{\t},{(\t*\t)})
    }
       -- cycle;

     \fill[fill=white] 
     ({2},{-1}, {1/2})
    \foreach \t in {-0.95, -0.9,...,0}
    {
        -- ({2},{\t},{(\t*\t)/2})
    }
      -- ({2},{0},{0}) -- ({1}, {0}, {0})
 \foreach \t in {-0.05,-0.1,...,-0.5}
    {
        --({1},{\t},{(\t*\t)})
    }
      --({1},{-0.5},{(0.5*0.5)}) -- cycle;


   \draw[black, thick] 
		({1},{-0.1},{(0.1)*(0.1)})
    \foreach \t in {-0.1,-0.15,...,-0.5}
    {
        --({1},{\t},{(\t*\t)})
    };
   \draw[black, thick] 
		({1},{-0.1},{(0.1)*(0.1)})
    \foreach \t in {0,0.1,...,0.4}
    {
        --({1},{\t},{(\t*\t)})
    };

 \draw[black, thick] 
      ({1},{0.27},{(0.27)*(0.27)})
  \foreach \t in {0.3,0.35,...,0.5}
    {
        --({1},{\t},{(\t*\t)})
    }
  -- ({1},{1/2},{1/4}) -- ({2},{1},{1/2})
  \foreach \t in {1.0,0.95,...,0}
    {
        --({2},{\t},{(\t*\t)/2})
    }
     
    ;

\draw[black, dashed]
	({2}, {0}, {0})
\foreach \t in {0, -0.05, ..., -0.4}
{
--({2}, {\t}, {(\t*\t)/2})
}
;

\draw[black, thick]
	({2}, {-0.35}, {(0.35*0.35)/2})
\foreach \t in {-0.35, -0.4, ..., -1}
{
--({2}, {\t}, {(\t*\t)/2})
}
 --({2},{-1},{1/2}) -- (1, -1/2, 1/4);

\draw[black, thick]
(1,0.05,0) -- (2,0.1,0);


\fill[fill=gray]
({1.3}, {1.3*1.3-1}, {0})
\foreach \t in {1.3, 1.35, ..., 1.45}
{
--({\t}, {\t*\t -1}, {0})
}
--({1.45}, {1.45*1.45-1}, {0}) -- ({1.8}, {1.8*1.8 - 1.35*1.35}, {0})
\foreach \t in {1.8, 1.75, ..., 1.65}
{
--({\t}, {\t*\t - 1.35*1.35}, {0})
}
--({1.65}, {1.65*1.65 - 1.35*1.35}, {0}) -- ({1.3}, {1.3*1.3-1}, {0});


\fill[fill=brown]
({1.3}, {(3/5)*(1.3*1.3-1)}, {(9/25)*0.37})
\foreach \t in {1.3, 1.35, ..., 1.45}
{
--({\t}, {(3/5)*(\t*\t-1)}, {(9/25)*(\t*\t-1)*(\t*\t-1)/\t})
}
--({1.45}, {(3/5)*(1.45*1.45-1)}, {(9/25)*0.84}) --({1.8}, {(3/5)*(1.8*1.8 - 1.35*1.35)}, {(9/25)*1.12})
\foreach \t in {1.8, 1.75, ..., 1.65}
{
--({\t}, {(3/5)*(\t*\t - 1.35*1.35)}, {(9/25)*(\t*\t - 1.35*1.35)*(\t*\t - 1.35*1.35)/\t})
}
--({1.65}, {(3/5)*(1.65*1.65 - 1.35*1.35)}, {(9/25)*0.49}) -- ({1.3}, {(3/5)*(1.3*1.3-1)}, {(9/25)*0.37});


\draw[gray, thick]
	({0}, {-1}, {0})
\foreach \t in {0, 0.01, ..., 1.5}
{
--({\t}, {\t*\t - 1}, {0})
};


\draw[black, dashed]
(1, 0.4, 0.4*0.4)--(2, 0.8, 0.4*0.8);
\draw[black, dashed]
(1, 0.3, 0.3*0.3)--(2, 0.6, 0.6*0.3);
\draw[black, dashed]
(1, 0.2, 0.2*0.2)--(2, 0.4, 0.4*0.2);
\draw[black, dashed]
(1, 0.1, 0.1*0.1)--(2, 0.2, 0.2*0.1);

\node at ({1.35}, {0.66}, {0.27}) { $\tilde{\theta}$};
\node at ({1.2}, {1.1}, {0}) { $\mathbb{L}_\theta$};

\node[below] at  ({0}, {3/4}, {0}) {\large $\xi_2$};		
\node[below] at  ({1/2}, {0}, {0}) {\large $\xi_1$};	
\node[left] at  ({0}, {0}, {7/16}) {\large $\xi_3$};

		\end{scope}
			\end{tikzpicture}
\caption{Here, the projection onto $\Cc$ of a general $\tth$ with its corresponding ``shadow" $\L_\th$ is represented, as well as the caps that correspond to the original decoupling partition provided in the theorem of Bourgain and Demeter \cite{BD3}.}
\end{center}
\end{figure}
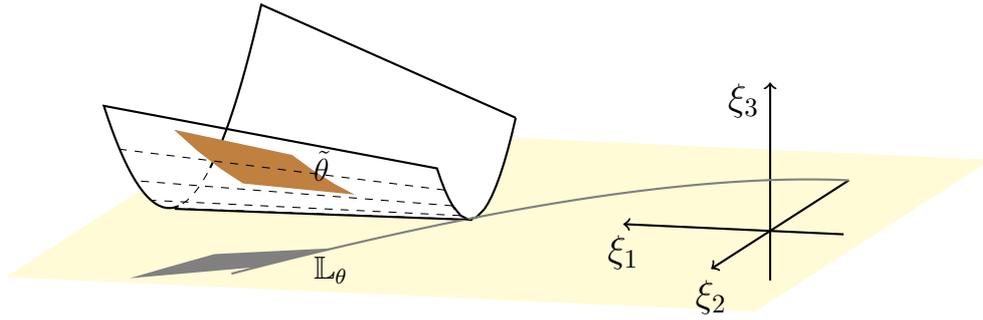

\noindent \emph{Acknowledgement.} The author would like to thank his advisor Ciprian Demeter for many encouragements and helpful discussions concerning decoupling and harmonic analysis.

\section{A translation invariance of the moment surface decoupling}
\label{s4}

One nice feature of working with the moment surface is that its decoupling is invariant with respect to translation in the $t$-variable. This is because the full moment surface is preserved by a class of corresponding affine maps of $\R^3$.

Extend the domain of $\x$ to $\R \times [0,2]$. The vertical translates of the moment surface foliate a.e. the corresponding subregion of $\R^3$ as demonstrated by the map $\Psi: \R^3 \rightarrow \R^3$:

$$\Psi(t,s,v) = (t + s, t^2 + 2ts, t^3 + 3t^2s + v)$$

The Jacobian $\mathbf{J}\Psi$ is equal to $2s$ in absolute value, and $\Psi$ is injective. Therefore, by the change of variables theorem, any integral over $\Nc_\delta = \Nc_\delta(\x(\R \times [0,2]))$ can be written as: \begin{equation} \label{cov} \int_{\Nc_\delta} g(\xi) d\xi = \int_{-\delta}^\delta \int_0^2 \int_\R g(\Psi(t,s,v))(2s) \text{ }dtds dv \end{equation}

Our translation invariance holds at any scale for $t$ and any specified range for $s$. Therefore, we shall consider subsets $\M_a$ of the full moment surface given as 
$$\M_a = \x([a-\eta, a+\eta] \times [c,d])$$ where $\eta > 0, a \in \R$, and $c < d\in [0,2]$. We shall also make use of linear maps $\A = \A_a$ described by $$\A(\xi_1, \xi_2, \xi_3) = \xi_1(1, 2a, 3a^2) + \xi_2(0,1,3a) + \xi_3(0,0,1),$$ which have determinant 1. (This fact is just a curiosity for our purposes, as it holds no bearing on what follows.)

Let us introduce the decoupling constants $\Dec_a(p,\eta)$. They are defined to be the smallest constant $C$ that satisfies

\begin{equation} \label{deca} \|f\|_{L^p(\R^3)} \leq C(\sum_{\tt \in \Pc_{\delta, a}(\M_a)} \|f_\tt\|_{L^p(\R^3)}^2)^{1/2}\end{equation}
for all $f$ that have Fourier support contained within $\Nc_{\delta, a} = \Nc_\delta(\M_a)$. $\Pc_{\delta, a} = \Pc_{\delta, a}(\M_a)$ is defined in analogy to the partition given in Theorem ~\ref{4}.

\begin{clm} $\Dec_a(p,\eta) = \Dec_b(p,\eta)$ for all $a, b \in \R$. \end{clm}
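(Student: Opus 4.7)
The plan is to exhibit an explicit affine bijection of $\R^3$ that transports the decoupling inequality for $\M_b$ back to the one for $\M_a$. Set $h:=b-a$ and $\phi(h):=(h,h^2,h^3)$, and consider the affine map
\begin{equation*}
T_h(\xi) := \A_h\,\xi + \phi(h),
\end{equation*}
where $\A_h$ is the linear map defined in the paragraph preceding the claim. The first step is the algebraic identity
\begin{equation*}
\x(t+h,\,s) = T_h\bigl(\x(t,s)\bigr),
\end{equation*}
obtained from a direct expansion of $(t+h)^k+k(t+h)^{k-1}s$ in powers of $h$; this is precisely what the definition of $\A_h$ was engineered to ensure. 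Since $\A_h$ also fixes every vector of the form $(0,0,v)$, one moreover has $T_h\bigl(\x(t,s)+(0,0,v)\bigr)=\x(t+h,s)+(0,0,v)$.

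With these two identities in hand, $T_h$ becomes a measure-preserving bijection (because $|\det\A_h|=1$) that sends $\M_a$ onto $\M_b$, sends the vertical thickening $\Nc_{\delta,a}$ onto $\Nc_{\delta,b}$, and sends the decoupling partition $\Pc_{\delta,a}$ piece-by-piece onto $\Pc_{\delta,b}$. The partition transfer is clear because $\Pc_{\delta,a}$ is indexed by dyadic $s$-strata (untouched by $T_h$) and by $t$-subintervals of $[a-\eta,a+\eta]$ whose length depends only on $k$ and $\delta$, which the shift $t\mapsto t+h$ sends to subintervals of $[b-\eta,b+\eta]$ of the same length. For a given $f$ with Fourier support in $\Nc_{\delta,b}$, the plan is to define a companion $g$ by $\hat g(\xi):=\hat f(T_h\xi)$, so that $\hat g$ is supported in $T_h^{-1}(\Nc_{\delta,b})=\Nc_{\delta,a}$, and an affine change of variables in the inverse Fourier integral yields
\begin{equation*}
g(x) = e^{-2\pi i\, x\cdot\A_h^{-1}\phi(h)}\, f\bigl(\A_h^{-T}x\bigr).
\end{equation*}
Using $|\det\A_h|=1$ and changing variables once more in physical space then gives $\|g\|_{L^p(\R^3)}=\|f\|_{L^p(\R^3)}$, and the very same calculation applied on a single Fourier piece shows $\|g_{\tt'}\|_{L^p(\R^3)}=\|f_{\tt}\|_{L^p(\R^3)}$ whenever $\tt=T_h(\tt')$ with $\tt\in\Pc_{\delta,b}$ and $\tt'\in\Pc_{\delta,a}$.

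Applying the defining inequality for $\Dec_a(p,\eta)$ to $g$ and then substituting the norm identities just established yields
\begin{equation*}
\|f\|_{L^p(\R^3)} \leq \Dec_a(p,\eta)\Bigl(\sum_{\tt\in\Pc_{\delta,b}} \|f_{\tt}\|_{L^p(\R^3)}^2\Bigr)^{1/2},
\end{equation*}
whence $\Dec_b(p,\eta)\leq\Dec_a(p,\eta)$; the reverse inequality follows by swapping the roles of $a$ and $b$. The only content-bearing step in the argument is the verification of the identity $\x(t+h,s)=T_h(\x(t,s))$, which is a brief algebraic computation. Everything else — volume preservation, preservation of the vertical thickening, preservation of the partition, and $L^p$-isometry on the Fourier side — reduces to the two bare facts $\det\A_h=1$ and $\A_h(0,0,1)=(0,0,1)$.
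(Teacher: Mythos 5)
Your proof is correct and follows essentially the same strategy as the paper's: conjugate by the explicit affine map $T_h(\xi)=\A_h\xi+\phi(h)$ (the paper's $\A_{a-b}$ plus translation by $(a-b,(a-b)^2,(a-b)^3)$), verify it carries $\x(t,s)$ to $\x(t+h,s)$ and fixes the vertical direction, and then observe that the induced Fourier transplant $\hat g(\xi)=\hat f(T_h\xi)$ preserves all $L^p$-norms and respects the cap partition. The only cosmetic differences are that the paper proves $\Dec_a\le\Dec_b$ via the parametrization $\Psi$ while you prove $\Dec_b\le\Dec_a$ by a direct change of variables in $\xi$, and you are slightly more explicit about the partition transfer and the piecewise norm identities $\|g_{\tt'}\|_{L^p}=\|f_{\tt}\|_{L^p}$.
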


\begin{proof} It of course suffices to show that $\Dec_a(p, \eta) \leq \Dec_b(p,\eta)$. We first confirm that $\M_b$ is mapped affinely onto $\M_a$. Let $\A = \A_{a-b}$. For each $p\in \M_a$, write $$p = (t + a-b + s, (t+a-b)^2 + 2(t+a-b)s, (t+a-b)^3 + 3(t+a-b)^2s)$$ where $t \in [b-\eta, b+\eta]$. A quick check shows that $$(t+a-b+s, (t+a-b)^2 + 2(t+a-b)s, (t+a-b)^3 + 3(t+a-b)^2s) $$ $$= (t+s)(1, 2(a-b), 3(a-b)^2) + (t^2 + 2ts)(0,1,3(a-b)) + (t^3 + 3t^2s)(0,0,1)$$ $$=(t+s)\A e_1 + (t^2 + 2ts)\A e_2 + (t^3 + 3t^2s)\A e_3 + (a-b, (a-b)^2, (a-b)^3)$$

We assume of course that $f$ is Schwartz (as well as Fourier supported in $\Nc_{\delta, a}$), so that Fourier inversion applies. Define $e(r) = e^{2\pi i r}$, and let $\mathbf{a-b} = (a-b, (a-b)^2, (a-b)^3)$. By two applications of change-of-variables, we have 

\begin{eqnarray*}
|f(x)| &=& |\int_{\Nc_{\delta,a}} \hat{f}(\xi) e(x \cdot \xi) d\xi|\\&=& |\int_{-\delta}^\delta \int_c^d \int_{a-\eta}^{a+\eta} \hat{f}(\Psi(t,s,v))e(x \cdot \Psi(t,s,v))(2s) \text{ }dtdsdv| \\&=& |\int_{-\delta}^\delta \int_c^d \int_{b-\eta}^{b+\eta} \hat{f}(\Psi(t+a-b, s, v))e(x \cdot \Psi(t+a-b, s,v))(2s)\text{ }dtdsdv| \\&=&|\int_{-\delta}^\delta \int_c^d \int_{b-\eta}^{b+\eta} \hat{f}(\Psi(t+a-b, s,v))e(x \cdot \A(\Psi(t,s,v)))e(x \cdot (\mathbf{a-b}))(2s) \text{ }dtdsdv| \\&=& |\int_{-\delta}^\delta \int_c^d \int_{b-\eta}^{b+\eta} \hat{g}(\Psi(t,s,v))e(\A^Tx \cdot \Psi(t,s,v))(2s) \text{ }dtdsdv| \\&=& |\int_{\Nc_{\delta, b}} \hat{g}(\xi) e(\A^T x \cdot \xi) d\xi| 
\end{eqnarray*} where $\hat{g}(\Psi(t,s,v)) = \hat{f}(\Psi(t+a-b, s, v))$. Note that $g$ is Fourier supported in $\M_b$.

Evidently, $g(\A^Tx) = f(x)$. Therefore, \begin{eqnarray*} \|f\|_{L^p(\R^3)} &=& \|g\circ \A^T\|_{L^p(\R^3)}\\ &=& |\|g\|_{L^p(\R^3)} \\ &\leq & \Dec_b(p, \eta) (\sum_{\tt \in \Pc_{\delta, b}} \|g_\tt\|_{L^p(\R^3)}^2)^{1/2}\\ &=&\Dec_b(p, \eta) (\sum_{\tt \in \Pc_{\delta, a}} \|f_\tt\|_{L^p(\R^3)}^2)^{1/2}. \end{eqnarray*}

\end{proof}
\section{Decoupling near and far from the moment curve}
\label{s5}

In ~\eqref{five}, we achieved a partition of $\Nc_\delta(\M)$ into regions $\Nc_\delta(A)$ and $\Nc_\delta(A_k)$. Consequently, we only need to show how the partitions $\Pc_\delta(A_k)$ are obtained for functions $f$ that are Fourier supported in $\Nc_\delta(A_k)$. (We will neglect labeling $f$ by its Fourier support when it is clear from the context.) In this section, we  address only the regions $A$ and $A_0$. The other $A_k$ are addressed in Section \ref{s6}.

We first consider the region $A$. Recall that $\t$ represents the intervals of length $\delta^{1/3}$ that partition $[-1/2,1/2]$. We desire to decouple the $L^p$-norm of $f$ over sets $\tt$ that have the form $\{(t, t^2, t^3) + s(1,2t,3t^2): t \in \t, s \in [0, \delta^{1/3}]\}$. For this objective, we note that the moment curve projects down to the parabola $\P^1$. Therefore, we seek to use Theorem \ref{3}, yet we must verify two things.  Recalling the notation of Theorem \ref{3}, we claim \begin{itemize} \item[1)] $\Nc_\delta(A) \subset \Nc_{\delta^{2/3}}(\Cc yl)$. \item[2)] Each $\tt$ intersects $O(1)$ sets $\tth$, and each $\tth$ intersects $O(1)$ sets $\tt$, where $\t$ and $\th$ both have length $\delta^{1/3}$. \end{itemize}

The first claim will allow for partitioning $\Nc_{\delta}(A)$, using Theorem ~\ref{3}, into its intersections with curved boxes $\tth$ having dimensions $\sim \delta^{1/3} \times \delta^{2/3} \times \delta$. The second statement will enable a recovery of the desired boxes $\tt$ in Theorem ~\ref{4}, by way of standard Fourier projection results.

Let us prove the claims. For any $t \in [-1/2,1/2]; s \in [0, \delta^{1/3}]$, \begin{equation} \label{5.1} (t+s)^2 - (t^2 + 2ts) = s^2 \leq \delta^{2/3}.\end{equation} Therefore, $(t + s, t^2 + 2ts, t^3 + 3t^2s) \in \Nc_{\delta^{2/3}}(\Cc yl)$. By considering the $x$-coordinates of the points in $\tt$ and $\tth$, it is immediate that each $\tt$ intersects two $\tth$ and that each $\tth$ intersects at most two $\tt$.

For the remainder of this section, our task will be to obtain the partition $\Pc_{\delta}(A_0)$. This region is good for decoupling because it locally approximates a cone at sufficiently small scales. Indeed, setting $$\xi_1 = t+s, \xi_2 = t^2 + 2ts, \xi_3 = t^3 + 3t^2s, $$we have \begin{equation} \label{capp} \xi_3 = -2\xi_1^3 + 3\xi_1\xi_2 + 2(\xi_1^2 - \xi_2)^{3/2}. \end{equation}

Since $$\xi_1^2 - \xi_2 = s^2 \in [1,4]$$ and $$ \xi_1 = t+s \geq 1/2,$$ Taylor approximation applies to the fractional power in  ~\eqref{capp} yielding that $A_0$ is contained within the graph of $$ \xi_3 = \frac{3}{2}\frac{\xi_2^2}{\xi_1} + O(\frac{\xi_2^3}{\xi_1^3})$$ \begin{equation} \label{tapp} = \frac{3}{2}\frac{\xi_2^2}{\xi_1} + O(\xi_2^3) .\end{equation}

Let $\Cc'$ denote the cone described by $$\xi_3 = \frac{3}{2}\frac{\xi_2^2}{\xi_1}$$ with $\xi_1 \in [1/4,10]; \xi_2 \in [-10,10]$.

For the sequel, we note that \begin{equation} \label{sim} |\xi_2| \sim |t| \end{equation} throughout $A_0$. 

We now prove inequality ~\eqref{four} for functions $f$ that are Fourier supported in $\Nc_\delta(A_0)$. 

\begin{proof} Let $2 \leq p \leq 6$. Using the triangle inequality and H\"{o}lder, we have $$\|f\|_{L^p} \lesssim (\|f_{\Nc_\delta(\x([-1/2, 0] \times [1,2]))}\|_{L^p}^2 + \|f_{\Nc_\delta(\x([0,1/2] \times [1,2]))}\|_{L^p}^2)^{1/2}.$$ In light of Section ~\ref{s4}, it suffices to decouple $\|f_{\Nc_\delta(\x([0,1/2]\times [1,2]))}\|_{L^p}$.

For this task, we apply successive iterations of cone decoupling at increasingly smaller scales. Letting $\a_j$ denote intervals of length $\sim (1/2)^{(3/2)^j}$ that partition $[0, 1/2]$ and defining $\ta_j = \Nc_\delta(\x(\a_j \times [1,2]))$ for $0 \leq j \lesssim \lfloor \log \log 1/\delta \rfloor$, we will deduce a decoupling inequality \begin{equation} \label{stage1} \|f\|_{L^p} \lesssim (\sum_{\a_{j+1}} \|f_{\ta_{j+1}}\|_{L^p}^2)^{1/2} \end{equation} from a given inequality \begin{equation} \label{stage2} \|f\|_{L^p} \lesssim (\sum_{\a_{j}} \|f_{\ta_{j}}\|_{L^p}^2)^{1/2} \end{equation}
where $j$ satisfies $$(1/2)^{(3/2)^{j-1}} > \delta^{1/3}.$$

In deriving ~\eqref{stage1} from ~\eqref{stage2}, translation invariance implies that we need only decouple $\|f_{\ta_j}\|_{L^p}$  for $\a_j = [0, (1/2)^{(3/2)^j}]$. By ~\eqref{sim}, we know that every point of $\a_j$ has $\xi_2$-coordinate lying in $[0, C(1/2)^{(3/2)^j}]$, where $C$ is some fixed constant. Therefore, ~\eqref{tapp} implies that $\ta_j$ is contained within the $D(1/2)^{3(3/2)^j}$-neighborhood of the cone $\Cc'$ for some fixed constant $D \geq 1$. So by Theorem \ref{10}, we may partition $\a_j$ into intervals $\a_{j+1}$ having length $(1/2)^{(3/2)^{j+1}}$, achieving for each $\epsilon > 0$ \begin{equation} \label{ctomom}  \|f_{\ta_j}\|_{L^p} \leq C_{\epsilon} (2^{3(3/2)^{j}})^{\epsilon}D^{1/4}(\sum_\tth \|f_\tth\|_{L^p}^2)^{1/2}, \end{equation}
where the $\tth$ are the intersections of the $D(1/2)^{3(3/2)^j}$-neighborhoods of the sets $$\{\g(t+1, t^2+ 2t,\frac{3}{2}\frac{(t^2+2t)^2}{t+1}): \g \in \R^+, t \in \a_{j+1}\}$$
 with $\Nc_\delta(A_0)$.

It remains to recover the sets $\ta_{j+1}$ on the right side of ~\eqref{ctomom}. As at the beginning of this section, we accomplish this task by demonstrating that each $\tth$ intersects at most $O(1)$ sets $\ta_{j+1}$ and also that each $\ta_{j+1}$ intersects at most $O(1)$ sets $\tth$. It of course is sufficient to work with the $(\xi_1,\xi_2)$-projections of these sets. The slope of the ray $$\Lc_{1, t} = \{\g(t+1, t^2+2t): \g \in \R^+\}$$ is smaller than that of the line segment $$\Lc_{2,t} = \{\g(1,2t) + (t+1, t^2+2t): \g \in [0,1]\}.$$ Therefore, it is enough to show that for all $j$, \begin{equation} \label{noint} \Lc_{1, t+(1/2)^{(3/2)^{j+1}}} \cap \Lc_{2, t} = \emptyset \end{equation}  for all $t \in [0, (1/2)^{(3/2)^j}]$.

In order to verify ~\eqref{noint}, we assume that \begin{equation} \label{first} t + s = \g(t+(1/2)^{(3/2)^{j+1}}+1) \end{equation} \begin{equation} \label{sec} \g((t+(1/2)^{(3/2)^{j+1}})^2 + 2(t + (1/2)^{(3/2)^{j+1}})) \leq t^2 + 2ts \end{equation} where $t \in [0, (1/2)^{(3/2)^j}], s \in [1,2],$ and $\g \in \R^+$.

Solving for $\g$ in ~\eqref{first} and plugging the value into ~\eqref{sec}, we obtain \begin{equation} \label{thir} (t + s)((t+(1/2)^{(3/2)^{j+1}})^2 + 2(t + (1/2)^{(3/2)^{j+1}})) \leq (t^2 + 2ts)(t+(1/2)^{(3/2)^{j+1}}+1). \end{equation}

It is immediate that ~\eqref{thir} implies $$2(1/2)^{(3/2)^{j+1}} \leq (t^2s) \leq 2(1/2)^{2(3/2)^j},$$ which yields a contradiction.

Finally, the inductive proof submits $$ \|f\|_{L^p} \leq (C_\epsilon D^{1/4})^{\log_{3/2} \log 1/\delta}(2^{\sum_{j=0}^{\log_{3/2} \log 1/\delta}3(3/2)^j}\cdot 1/\delta)^{\epsilon}(\sum_{\tt \in\Pc_\delta(A_0)} \|f_\tt\|_{L^p}^2)^{1/2} $$ \begin{equation} \label{momcone}= (\log 1/\delta)^{2\log C_\epsilon D^{1/4}}(1/\delta)^{20\epsilon}(\sum_{\tt \in\Pc_\delta(A_0)} \|f_\tt\|_{L^p}^2)^{1/2}.\end{equation}

\end{proof}

\section{Decoupling for the intermediate region}
\label{s6}

Now that we have a decoupling for the region $A_0$, it is relatively straightforward to obtain the partitions $\Pc_\delta(A_k)$ for all $k > 0$. The argument will use ~\eqref{momcone} as a ``black box", combined with a simple rescaling and translation of each region $\Nc_\delta(A_k)$ into $\Nc_{2^{3k}\delta}(A_0)$.

For conciseness, we shall for the rest of this paper employ the symbol $`` \lessapprox "$ to denote inequalities of the form $``\lesssim_\epsilon \delta^{-\epsilon} "$ that hold for each $\epsilon > 0$. 

\begin{proof} 

Let $2 \leq p \leq 6$, and let $f$ be Fourier supported in $\Nc_\delta(A_k)$, where $\delta^{1/3}\leq 2^{-k} \leq 1$. According to \eqref{cov}, \begin{equation} \label{6cov} f(x) = \int_{-\delta}^ \delta \int_{2^{-k}} ^{2^{-k+1}} 2s\int_{-1/2}^{1/2} \hat{f}(\Psi(t,s,v))e(2\pi ix\cdot \Psi(t,s,v)) \text{ } dt ds  dv. \end{equation}

Rescaling both $t$ and $s$ in ~\eqref{6cov} by a factor of $2^k$, we have $$f(x) = $$ $$(2^{-k})^6\int_{-2^{3k}\delta}^{2^{3k}\delta}\int_1^2 2s\int_{-2^{k-1}}^{2^{k-1}} \hat{f}(\Psi(2^{-k}t, 2^{-k}s, 2^{-3k}v))e(2\pi i (2^{-k}x_1, 2^{-2k}x_2, 2^{-3k}x_3) \cdot \Psi(t,s,v)) \text{ } dt ds dv $$ $$ $$
so that $f$ is a composition of an invertible linear map $\mathbf{L}$, defined by $\mathbf{L}(x_1,x_2,x_3) = (2^{-k}x_1,$ $2^{-k}x_2, 2^{-3k}x_3)$, with a function $g$ whose Fourier transform is supported in $$\Nc = \Nc_{2^{3k}\delta}(\x([-2^{k-1}, 2^{k-1}] \times [1,2])).$$ The computation of \eqref{5.1} shows that \begin{equation} \label{parapprox} \Nc \subset \Nc_4(\P^1) \times \R. \end{equation}

Recall that $2^{k} \leq \delta^{-1/3}$. Therefore, as was mentioned in Section \ref{s1}, Theorem ~\ref{1} yields an efficient decoupling partition of $\Nc$ into boxes that project onto the $\xi_1$-axis as intervals of length $2$. This is essentially seen by noting that for all $\delta > 0$ and $a > 1/2$, a function $f$ that is Fourier supported in $\Nc_\delta(\{(x, x^2): x \in [-a,a]\})$ can be rewritten as $f(x_1, x_2) = a^3g(ax_1, a^2x_2)$ for some function $g$ that is Fourier supported in the $(\delta/a^2)$-neighborhood of $\P^1$ with restricted domain $[-1/2,1/2]$. Applying this fact together with Theorem ~\ref{3} and the usual combination of the triangle and H\"{o}lder's inequalities, we obtain 

\begin{equation} \label{fdec} \|f\|_{L^p} = |\text{det }\mathbf{L}|^{-1/p}\|g\|_{L^p}  \lessapprox |\text{det } \mathbf{L}|^{-1/p} (\sum_\alpha \|g_{\ta}\|_{L^p}^2)^{1/2} \end{equation} where the elements $\a$ partition $[-2^{k-1}, 2^{k-1}]$ into intervals of length $1$ and $\ta = \Nc_{2^{3k}\delta}(\x(\a \times [1,2]))$. 

Because of Section ~\ref{s4}, we may ``translate" each box $\ta$ in \eqref{fdec} to $\Nc_{2^{3k}\delta}(A_0)$. Then, the latter half of Section ~\ref{s5} applies, and we may afterward translate back to the original position of $\ta$ to get \begin{equation} \label{idec} \|g_{\ta}\|_{L^p} \lessapprox (\sum_{\t^0 \subset \a} \|g_{\tt^0}\|_{L^p}^2)^{1/2} \end{equation} 
where each $\t^0$ has length $(2^{3k}\delta)^{1/2}$, and these intervals partition $\a$.

Now each $g_{\tt^0}$ has the form $$g_{\tt^0}=$$ $$(2^{-k})^6 \int_{-2^{3k}\delta}^{2^{3k}\delta}\int_1^2 2s\int_{\tt^0} \hat{f}(\Psi(2^{-k}t, 2^{-k}s, 2^{-3k}v))e(2\pi i(x_1, x_2, x_3) \cdot\Psi(t,s,v))\text{ } dt ds dv. $$

By change of variables, \begin{eqnarray*} g_{\tt^0} &=& \int_{-\delta}^\delta \int_{2^{-k}}^{2^{-k+1}} 2s\int_\t \hat{f}(\Psi(t,s,v))e(2\pi i(x_1, x_2, x_3) \cdot \Psi(t,s,v)) \text{ } dtdsdv \\ &=& f_{\tt_k} \circ \mathbf{L}^{-1} \end{eqnarray*} where the intervals $\t_k$ have length $(2^k\delta)^{1/2}$ and partition $[-1/2,1/2]$, as desired. The proof is now complete.

\end{proof}

\section{Extension of the result}
\label{s7}

With decoupling for the moment surface now attained, the $l^2$ decoupling theory of arbitrary tangent surfaces $\S$ immediately follows. The result of this section will hold for any $\S$ that is generated by a $C^4$ regular curve having nonzero torsion throughout its domain. We remind the reader that a \emph{regular} $C^1$ curve is characterized by the non-vanishing of the curve's tangent vector throughout the domain.

Let $\phi: [-1/2,1/2] \rightarrow \R^3$ be $C^4$ and regular. We may assume (because of the curve's regularity) that $\phi$ is parametrized by arc length, i.e. $$|\phi'(t)| = 1 \qquad \forall t \in [-1/2,1/2].$$ Then, it follows that $\phi''(t)$ is orthogonal to $\phi'(t)$ for all $t$; hence, the vectors $$\bt(t) = \phi'(t)$$ $$\n(t) = \frac{\phi''(t)}{|\phi''(t)|}$$ $$\b(t) = \bt(t) \wedge \n(t)$$ form an orthonormal frame, called the \emph{Frenet trihedron}. As well, we may describe the deviations of $\phi$ from its one-dimensional and two-dimensional linear approximations using the \emph{curvature} and \emph{torsion} functions. The curvature is defined by $$\k(t) = |\phi''(t)| $$ and the torsion is given by $$\mu(t) = \b'(t) \cdot \n(t).$$ We mention for later use that nonzero torsion at a point implies nonzero curvature there as well.

We now consider the tangent surface $\S$ defined by \begin{equation} \label{6.1} \y(t,s) = \phi(t) + s\phi'(t), \qquad t \in [-1/2,1/2], s \in [0,2]. \end{equation} We shall see that locally $\phi$ looks like a rescaled rotation of the moment curve. Thus, we will obtain Theorem ~\ref{5} as a corollary of Theorem ~\ref{4}. 

Set $\bt = \bt(t_0), \n = \n(t_0),$ and $\b = \b(t_0)$. By Taylor approximation,

$$ \phi(t) = \phi(t_0) + (t-t_0-\frac{\k(t_0)^2(t-t_0)^3}{6})\bt + (\frac{\k(t_0)(t-t_0)^2}{2} + \frac{\k'(t_0)(t-t_0)^3}{6})\n -\frac{\k(t_0)\m(t_0) (t-t_0)^3}{6}\b  $$ \begin{equation} \label{frenet} + O((t-t_0)^4),\end{equation} where the constant in $O((t-t_0)^4)$ is the $C^4$ norm of $\phi$. (See Section 1.6 of \cite{DoC} for the relevant computations.) The reader may note that ~\eqref{frenet} has some resemblance to the moment curve.

In light of ~\eqref{frenet}, it is natural to describe the $\delta$-neighborhood of a general tangent surface using the binormal vectors $\b(t)$. Our point is confirmed by the fact that $$(t,s) \mapsto \b(t)$$ is a unit normal vector field on $\S\backslash \y([-1/2,1/2] \times \{0\})$ that is compatible with the parametrization $\y$. Throughout this section, $\Nc_\delta(\S)$ will denote the following set \begin{equation} \label{6.2} \{\y(t,s) + v\b(t): t \in [-1/2,1/2], s \in [0,2], v \in [-\delta, \delta]\}. \end{equation} 

\begin{te}{(Tangent surface decoupling.)} \label{5} For each $\delta^{1/3} \leq 2^{-k} \leq 1$, let $A_k$ and $\t_k$ be as in Theorem ~\ref{4} and let $A$ and $\t$ be as defined there. We define $\tt_k$ here as the $\delta$-neighborhood $($in the sense of ~\eqref{6.2}$)$ of $\y(\t_k \times [2^{-k}, 2^{-k+1}])$ and $\tt$ similarly. Also as before, $\Pc_\delta(A)$ and $\Pc_\delta(A_k)$ are comprised of the elements $\tt$ and $\tt_k$.

For each $2 \leq p \leq 6$ and for all $f$ Fourier supported in $\Nc_\delta(\S)$, \begin{equation} \label{6.3} \|f\|_{L^p(\R^3)} \lesssim_{\epsilon, \phi} \delta^{-\epsilon} (\sum_{\tt \in \Pc_\delta(A)} \|f_\tt\|_{L^p(\R^3)}^2+ \sum_{2^{-k} \geq \delta^{1/3}}\sum_{\tt_k \in \Pc_\delta(A_k)} \|f_{\tt_k}\|_{L^p(\R^3)}^2)^{\frac{1}{2}} \end{equation} with constant dependent only on $\phi$ and $\epsilon$.
\end{te}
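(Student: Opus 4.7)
My plan is to deduce Theorem~\ref{5} from Theorem~\ref{4} by localizing the $t$-parameter to short intervals on which $\phi$ agrees, after an affine change of coordinates, with the moment curve up to a small higher-order error. I would partition $[-1/2, 1/2]$ into intervals $I_j = [t_j - \eta, t_j + \eta]$ of some length $2\eta$ to be chosen, and decompose $f = \sum_j f_j$ via a smooth Fourier partition of unity subordinate to the local neighborhoods $\Nc_\delta(\S_j)$, where $\S_j := \y(I_j \times [0, 2])$.

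On each $I_j$, the Frenet--Serret expansion \eqref{frenet} and its $t$-derivative, combined with $\y = \phi + s\phi'$ and the identity $\x(u, s) = (u, u^2, u^3) + s(1, 2u, 3u^2)$, give
\[
  \y(t_j + u, s) \;=\; \phi(t_j) + M_j\, \x(u, s) + E_j(u, s), \qquad \|E_j(u, s)\| \leq C_\phi |u|^3,
\]
where $M_j$ is the invertible linear map
\[
  M_j e_1 = \bt(t_j),\;\; M_j e_2 = \tfrac{\kappa(t_j)}{2}\n(t_j),\;\; M_j e_3 = -\tfrac{\kappa(t_j)^2}{6}\bt(t_j) + \tfrac{\kappa'(t_j)}{6}\n(t_j) - \tfrac{\kappa(t_j)\mu(t_j)}{6}\b(t_j),
\]
with $|\det M_j| = \kappa(t_j)^2|\mu(t_j)|/12$ uniformly bounded in $j$ by the nonvanishing hypotheses on $\kappa$ and $\mu$. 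Composing $f_j$ with the affine map $x \mapsto \phi(t_j) + M_j^{-T} x$ produces (modulo a modulation and a constant Jacobian factor depending only on $\phi$) a function $g_j$ whose Fourier support lies in $M_j^{-1}(\Nc_\delta(\S_j) - \phi(t_j))$.

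The key technical claim is that this transformed set is contained in $\Nc_{C\delta}(\M_j')$, where $\M_j' = \x([-\eta, \eta] \times [0, 2])$ and $C = C(\phi)$. Verifying it amounts to (i) controlling the perturbation $M_j^{-1} E_j$, of size $O_\phi(\eta^3)$, and (ii) showing that the thickening direction $M_j^{-1}\b(t_j + u)$ makes a bounded angle with the normal of $\M_j'$ at $\x(u, s)$, uniformly in $(u, s)$. Once the containment is established, Theorem~\ref{4} applied to $g_j$ produces a decoupling into pieces corresponding under $M_j$ to the partition elements $\tt, \tt_k$ from \eqref{6.2} up to $O_\phi(1)$-overlapping covers; summing over $j$ then yields \eqref{6.3}.

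The principal obstacle is that the containment in (i) requires $C_\phi \eta^3 \lesssim \delta$, forcing $\eta \lesssim \delta^{1/3}$ and producing $O(\delta^{-1/3})$ intervals $I_j$, too many to sum na\"ively via the triangle inequality. I would resolve this via a multi-scale iteration in the spirit of Section~\ref{s5}: begin with $\eta$ of order $1$, apply the above reduction at the coarser scale $\eta^3$ rather than $\delta$ to decouple each $I_j$ into shorter subintervals, and then iterate on each subinterval at progressively finer scales, terminating once the local scale reaches $\delta$. The $O(\log\log \delta^{-1})$ stages each contribute a $C_\epsilon \delta^{-\epsilon}$ loss, keeping the total constant at $C_{\epsilon, \phi}\delta^{-\epsilon}$.
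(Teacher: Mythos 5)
Your high-level strategy---localize in $t$, use the Frenet expansion \eqref{frenet} to approximate $\y$ near $t_0$ by an affine image of the moment surface $\M$, apply Theorem~\ref{4}, and iterate over scales to avoid the $\delta^{-1/3}$ interval count---is the same as the paper's, which runs this as a Pramanik--Seeger bootstrap $\Dec(\delta,p)\lesssim_\epsilon C_\phi^{O(1)}\delta^{-O(\epsilon)}\Dec(\delta^{3/4},p)$. However, there is a genuine gap in your error estimate that prevents the iteration from closing. You bound the error by $\|E_j(u,s)\|\leq C_\phi|u|^3$ uniformly in $s\in[0,2]$, which is correct but too coarse. The estimate the paper actually uses (visible in \eqref{6.8}) is $\|E_j(u,s)\|\lesssim_\phi |u|^4 + s|u|^3$, and the entire point of first decoupling at the intermediate Fourier scale $\delta^{3/4}$ is that the resulting boxes at height $s\sim 2^{-k}$ have $t$-width $|u|\lesssim(2^k\delta^{3/4})^{1/2}$, so that the two quantities are linked and $|u|^4 + s|u|^3 = O(\delta)$ holds for every annulus with $2^{-k}\geq \delta^{1/4}$, even though $|u|$ may be as large as $\delta^{1/4}\gg\delta^{1/3}$. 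With only the crude bound $|u|^3\lesssim\delta$, the step from scale $\delta^{3/4}$ to scale $\delta$ fails for $|u|\sim\delta^{1/4}$; in particular the cylinder-like region (small $s$, where \emph{only} the refined bound saves you, since $s|u|^3\ll|u|^3$) makes no progress under your scheme: applying Theorem~\ref{4} at scale $\eta^3$ to an interval of length $\eta$ returns the $A$-region piece at $t$-width $(\eta^3)^{1/3}=\eta$, unchanged.

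Two further points. First, the iteration as you phrase it (``begin with $\eta\sim 1$, apply the reduction at scale $\eta^3$'') conflates the $t$-localization scale with the Fourier scale; at $\eta\sim 1$, applying Theorem~\ref{4} at Fourier scale $\eta^3\sim 1$ is vacuous, and the correct bootstrap is on the Fourier thickness $\delta$ itself, i.e.\ $\Dec(\delta)\to\Dec(\delta^{3/4})\to\cdots$. Second, even after the iteration, the finest-scale region $A$ (where $s<\delta^{1/3}$) is not resolved by Theorem~\ref{4} and requires the cylinder decoupling Theorem~\ref{3}, using that \eqref{6.13} places this piece in an $O(\delta^{2/3})$-neighborhood of a parabolic cylinder in Frenet coordinates; your proposal omits this final ingredient.
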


\begin{proof} We utilize the method of Pramanik-Seeger that was employed in Section 7 of \cite{BD3}. The procedure will be relatively straightforward with the primary obstacle being posed by the ``error'' terms in ~\eqref{frenet}. 

Concerning the constant in ~\eqref{6.3}, it has dependence on the $C^4$  norm of $\phi$ and on a finite number of powers (some possibly negative) of the torsion, curvature, and the derivative of the curvature of $\phi$. Let $C_\phi$ denote 10000 times the maximum of this collection of values.

Let us define the decoupling constant for $\S$. $\Dec(\delta, p)$ will denote the smallest constant $K > 0$ such that the following inequality is true: \begin{equation} \label{6.4} \|f\|_{L^p(\R^3)} \leq K (\sum_{\tt \in \Pc_\delta(A)} \|f_\tt\|_{L^p(\R^3)}^2+ \sum_{2^{-k} \geq \delta^{1/3}}\sum_{\tt_k \in \Pc_\delta(A_k)} \|f_{\tt_k}\|_{L^p(\R^3)}^2)^{\frac{1}{2}} \end{equation} In light of the triangle inequality and H\"{o}lder, it suffices to prove that for each $\epsilon > 0$, there exists $C_\epsilon > 0$ such that \begin{equation} \label{6.5} \Dec(\delta, p) \leq C_\epsilon C_\phi^{O(1)}\delta^{-O(\epsilon)} \Dec(\delta^{3/4}, p) \end{equation} for all $0< \delta <(1/100)C_\phi^{-100}$.

Let $f$ be a function whose Fourier transform is supported within $\Nc_\delta(\S)$ ($\delta$ is as specified in \eqref{6.5}). Then (since $\delta < 1$), $\hat{f}$ is also supported in $\Nc_{\delta^{3/4}}(\S)$, so \eqref{6.4} yields \begin{equation} \label{6.6} \|f\|_{L^p(\R^3)} \leq \Dec(\delta^{3/4},p) (\sum_{\tt \in \Pc_{\delta^{3/4}}(A)} \|f_\tt\|_{L^p(\R^3)}^2+ \sum_{2^{-k} \geq \delta^{1/4}}\sum_{\tt_k \in \Pc_{\delta^{3/4}}(A_k)} \|f_{\tt_k}\|_{L^p(\R^3)}^2)^{\frac{1}{2}} \end{equation} where $A = \y([-1/2,1/2] \times [0, \delta^{1/4}))$. Let us use the notation $\tt'$ and $\tt_k'$ to denote respectively the boxes $\Nc_\delta(\y(\t \times [0, \delta^{1/3})))$ and $\Nc_\delta(\y(\t \times [2^{-k}, 2^{-k+1})))$ with $\delta^{1/3} \leq 2^{-k} < \delta^{1/4}$ (where the $\t$'s have length $\delta^{1/4}$ and partition $[-1/2,1/2]$). The triangle inequality then yields from ~\eqref{6.6} $$  \|f\|_{L^p(\R^3)} \lessapprox \Dec(\delta^{3/4},p) (\sum_{\tt'} \|f_{\tt'}\|_{L^p(\R^3)}^2 + \sum_{\delta^{1/3} \leq 2^{-k} < \delta^{1/4}} \sum_{\tt_k'} \|f_{\tt_k'}\|_{L^p(\R^3)}^2 $$ \begin{equation} \label{6.7}+ \sum_{2^{-k} \geq \delta^{1/4}}\sum_{\tt_k \in \Pc_{\delta^{3/4}}(A_k)} \|f_{\tt_k}\|_{L^p(\R^3)}^2)^{\frac{1}{2}} \end{equation}

It therefore remains to reduce the length of $\t_k$ from $(2^k\delta^{3/4})^{1/2}$ to the value $(2^k\delta)^{1/2}$, as well as that of $\t$ to the appropriate values $\delta^{1/3}$ and $(2^k\delta)^{1/2}$ respectively. At this point, ~\eqref{frenet} intervenes. Let $\t_k = [t_0, t_0 + (2^k\delta^{3/4})^{1/2}]$ and let $\k = \k(t_0), \m = \m(t_0)$. For each $t \in \t_k$ and $2^{-k} \leq s < 2^{-k+1}$, $$ \y(t,s) = \phi(t_0) + ((t-t_0)+s - \frac{\k^2(t-t_0)^3}{6} - \frac{\k^2s(t-t_0)^2}{2})\bt + (\frac{\k(t-t_0)^2}{2} + \k s(t-t_0) $$ \begin{equation} \label{6.8}+ \frac{\k'(t-t_0)^3}{6} + \frac{\k's(t-t_0)^2}{2})\n - \k\m(\frac{(t-t_0)^3}{6} + \frac{s(t-t_0)^2}{2})\b + O(\delta) \end{equation} The inequality $$2^{-k} \geq \delta^{1/4}$$ is what implies that $(t-t_0)^4, s(t-t_0)^3 \leq 2\delta$ for all $t$ and $s$ as just stated. ~\eqref{6.8} also holds for $t \in \t$ and $0 \leq s < \delta^{1/4}$.

We first secure the full decoupling for the boxes $\tt_k$. According to \eqref{6.8}, a typical point $p \in \tt_k$ has the form $$ \phi(t_0) + (\rt + s - \frac{\k^2\rt^3}{6}  - \frac{\k^2s\rt^2}{2} + O(\delta))\bt $$ \begin{equation} \label{6.12} + (\frac{\k\rt^2}{2} + \k s\rt + \frac{\k'\rt^3}{6}+ \frac{\k's\rt^2}{2} + O(\delta))\n - (\frac{\k\mu\rt^3}{6} + \frac{\k\mu s\rt^2}{2} + O(\delta))\b + O(\delta)\b(t), \end{equation}
where $\rt = t - t_0 \in [0, (2^k \delta^{3/4})^{1/2}]$. In turn, $\b(t) = \b + O(1)$, with constant dependent upon the torsion of $\phi$. Therefore, ~\eqref{6.12} simplifies to \begin{equation} \label{6.13} \phi(t_0) + (\rt + s - \frac{\k^2\rt^3}{6}  - \frac{\k^2s\rt^2}{2} + O(\delta))\bt + (\frac{\k\rt^2}{2} + \k s\rt + \frac{\k'\rt^3}{6}+ \frac{\k's\rt^2}{2} + O(\delta))\n - (\frac{\k\mu\rt^3}{6} + \frac{\k\mu s\rt^2}{2} +O(\delta))\b. \end{equation}

We endeavor next to show that $p$ lies in the $O(\delta)$-neighborhood of an affine image $\M'$ of $\M$. The idea is to view the extra terms in \eqref{6.13} as a measure of how far $\S$ deviates from $\M'$. If we can transfer the error in the $\bt$ and $\n$ components of $\y(t,s)$ to the $\b$ component, we hope that the consequent error will be $O(\delta)$.

We accomplish this scheme simply by seeking solutions $t',s'$ to the following: \begin{equation} \label{*} t'+s' = \rt + s - \frac{\k^2 \rt^3}{6} - \frac{\k^2s\rt^2}{2}+ O(\delta) \end{equation} \begin{equation} \label{**} (t')^2 +  2t's' = \rt^2 + 2s\rt + \frac{\k'\rt^3}{3\k} + \frac{\k's\rt^2}{\k} +O(\delta).\end{equation} This achievement will enable us to conclude that $p \in \Nc_{C_\phi\delta}(\A(\M))$ for the map $$\A(\xi_1, \xi_2, \xi_3) = \phi(t_0) + \xi_1 \bt + \frac{\k}{2}\xi_2 \n - \frac{\k\mu}{6} \xi_3 \b.$$

Squaring \eqref{*} and subtracting it from \eqref{**} yields $$ (s')^2 = s^2 - \frac{\k^2\rt^4}{3} - \frac{\k^2s\rt^3}{3} - \k^2s\rt^3 - \k^2s^2\rt^2 + \frac{\k^4\rt^6}{36} + \frac{\k^4s^2\rt^4}{4} + \frac{\k^4s\rt^5}{6} - \frac{\k'\rt^3}{3\k} - \frac{\k's\rt^2}{\k} +O(\delta)$$  \begin{equation} \label{6.15} = s^2 + O(s\rt^{9/5}) +O(\delta)> 0 \end{equation} where we use the power $\rt^{1/5}$ to ensure that the first constant in \eqref{6.15} is $1/4$. This is possible because $\rt \leq s$.

By a first-order Taylor approximation, ~\eqref{6.15} implies \begin{equation} \label{6.16} s' = s + s^{-1/2}O(s\rt^{9/5}) = s+O(s^{1/2}\rt^{9/5}) +O(\delta)\end{equation} \begin{equation} \label{6.17} t' = \rt  - \frac{\k^2\rt^3}{6} - \frac{\k^2s\rt^2}{2} + O(s^{1/2}\rt^{9/5}) +O(\delta) = \rt + O(s^{1/2}\rt^{9/5}) +O(\delta)\end{equation} and therefore $$ (t')^3 + 3s'(t')^2 = t^3 + 3st^2 + O(s^{3/2}\rt^{14/5}) +O(\delta)$$ \begin{equation} \label{6.18} = \rt^3 + 3s\rt^2 + O(\delta) \end{equation} as desired.

Our achievement yields that $p \in \Nc_{(\frac{\k\mu}{6})C_\phi\delta}(\A(\M))$ for the map $$\A(\xi_1, \xi_2, \xi_3) = \phi(t_0) + \xi_1 \bt + \frac{\k}{2}\xi_2 \n - \frac{\k\mu}{6} \xi_3 \b.$$Thus, we may now apply Theorem ~\ref{4} and trivial decoupling in order to obtain the caps described in the theorem, yet \emph{prima facie} this scheme only decouples $\|f_{\tt_k}\|_p$ with respect to the $s'$ and $t'$ variables. In particular, $s'$ may traverse an interval containing either $2^{-k}$ or $2^{-k+1}$ in its interior, for a given $\tt_k$. However, as justified by a simple rescaling in $s$ (similar to what was done in Section \ref{s6}), Theorem \ref{4} encompasses many analogous inequalities for decoupling partitions that are determined by the ``annuli" $\x([-1/2,1/2] \times [a2^{-k}, a2^{-k+1}])$ with $a \sim 1$. We can apply one such inequality in order to maintain that $s$ ranges from $2^{-k}$ to $2^{-k+1}$ within the smaller caps. Furthermore, the error term in \eqref{6.17} assures us that we may further partition the $t'$-intervals, if necessary, at the expense of another numerical constant. In this way, we may finally obtain the intervals $\t_k$ of the specified length $(2^k\delta)^{1/2}$. 

Now, consider $\tt_k'$. The process outlined above for $\tt_k$ works for $\tt_k'$ also, except that we have to obtain different bounds in ~\eqref{6.15}. For example, concerning $\tt_k'$, it no longer remains true that $\rt \leq s$. However, we may exploit the inequalities \begin{equation} \label{6.19} \delta^{1/3} \leq s < \delta^{1/4} \end{equation} \begin{equation} \label{6.20} \rt \leq \delta^{1/4} \end{equation} to show $\rt^3, s\rt^2 \leq \delta^{3/4}$. Using the power $\rt^{1/6}$ to cancel the powers of $\k$ and $\k'$ in ~\eqref{6.15}, we thereby obtain \begin{equation}\label{6.21} (s')^2 = s^2 + O(\delta^{1/24}\delta^{2/3}) > 0. \end{equation} 

Again by a first-order approximation, ~\eqref{6.21} gives \begin{equation} \label{6.22} s' = s + O(\delta^{1/24}\delta^{1/2}) \end{equation} and thus \begin{equation} \label{6.23} t' = \rt + O(\delta^{1/24}\delta^{1/2}). \end{equation} These identities directly lead to \begin{equation} \label{6.24} (t')^3 + 3s'(t')^2 = \rt^3 + 3s\rt^2 + O(\delta) \end{equation} where the constant in ~\eqref{6.24} is $C_\phi$. As before, Theorem ~\ref{4} gives a decoupling with respect to $t'$-intervals of length $(2^k \delta)^{1/2}$, and we may then obtain $t$-intervals of the same size in light of the error term in \eqref{6.23}.

It only remains to further decouple $\|f_{\tt'}\|_{L^p(\R^3)}$. From ~\eqref{6.13}, it is known that $\tt'$ is contained within the cylinder $\{\phi(t_0) + \xi_1\bt+ \xi_2\n + \xi_3 \b: |\xi_1| \leq 2, |\xi_2 - \frac{\k}{2}\xi_1^2| \leq 2\delta^{2/3}\}$. Therefore, Theorem ~\ref{3} accomplishes the decoupling here, and the proof is now complete.

\end{proof}

\section{Flatness}
\label{s8}

It is in this section that we address the optimality of Theorems ~\ref{4} and ~\ref{5}. We begin our commentary by mentioning the following proposition.

\begin{pr}
\label{8.6}
Let $L$ be a line segment in $\R^n$ of length $\sim 1$. For each $0 \le \delta, N^{-1}<1$, let $\Pc_{\delta,N}$ be a partition of the $\delta$-neighborhood $\Nc_\delta(L)$ of $L$ into $\sim N$ cylinders $T$  with length $N^{-1}$ and radius $\delta$.

For $p>2$, let $D(\delta,N,p)$ be the smallest constant such that
\begin{equation}
\label{8.7}
\|f\|_{L^p(\R^n)}\le D(\delta,N,p)(\sum_{T\in\Pc_{\delta,N}}\|f_T\|_{L^p(\R^n)}^2)^{1/2}
\end{equation}
holds for all $f$ Fourier supported on $\Nc_\delta(L)$. Then
$$D(\delta,N,p)\sim N^{\frac12-\frac{1}{p}},$$
and (approximate) equality in \eqref{8.7} can be achieved by using a smooth approximation of $1_{\Nc_\delta(L)}$.
\end{pr}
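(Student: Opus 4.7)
The plan is to establish matching upper and lower bounds on $D(\delta,N,p)$. For the upper bound $D(\delta,N,p)\lesssim N^{1/2-1/p}$, the approach is to interpolate \eqref{8.7} between the endpoints $p=2$ and $p=\infty$. At $p=2$, Plancherel combined with the essentially disjoint frequency supports of the $f_T$ yields $\|f\|_{L^2}^2\sim \sum_T\|f_T\|_{L^2}^2$, hence $D(\delta,N,2)\sim 1$. At $p=\infty$, the triangle inequality followed by Cauchy--Schwarz over the $\sim N$ summands gives $\|f\|_{L^\infty}\leq N^{1/2}(\sum_T\|f_T\|_{L^\infty}^2)^{1/2}$, hence $D(\delta,N,\infty)\leq N^{1/2}$. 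Viewing \eqref{8.7} as a boundedness statement for the synthesis map $U:(g_T)_T\mapsto \sum_T g_T$ from the mixed-norm space $l^2(L^p)$ (restricted to tuples with $\widehat{g_T}$ supported in $T$) into $L^p$, complex interpolation of the $l^2(L^p)$ spaces at parameter $\theta=1-2/p$ gives $\|U\|\leq 1^{1-\theta}(N^{1/2})^\theta=N^{1/2-1/p}$ for all $2\leq p\leq\infty$.

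For the matching lower bound I would test \eqref{8.7} against a function $f$ whose Fourier transform is a smooth approximation of $\mathbf{1}_{\Nc_\delta(L)}$. After a rigid motion I may assume $L=[0,1]\times\{0\}^{n-1}$, so that $\Nc_\delta(L)$ is (up to negligible boundary effects) the product $[0,1]\times B_\delta^{n-1}$ and $\hat f$ splits as a product of bumps in $\xi_1$ and in $\xi'=(\xi_2,\dots,\xi_n)$. Taking inverse Fourier transforms in each factor and applying standard Dirichlet kernel and Bessel estimates shows that $|f(x)|\sim \delta^{n-1}$ on the physical-space tube $\{|x_1|\lesssim 1,\;|x'|\lesssim \delta^{-1}\}$ of volume $\sim \delta^{-(n-1)}$, with rapid decay outside; hence $\|f\|_{L^p(\R^n)}\sim \delta^{(n-1)(1-1/p)}$. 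An identical computation for $f_T$, whose Fourier transform is a bump on a cylinder of dimensions $N^{-1}\times\delta\times\cdots\times\delta$, yields $|f_T(x)|\sim N^{-1}\delta^{n-1}$ on a dual tube of volume $\sim N\delta^{-(n-1)}$, so $\|f_T\|_{L^p}\sim N^{1/p-1}\delta^{(n-1)(1-1/p)}$. Summing over the $\sim N$ cylinders gives $(\sum_T\|f_T\|_{L^p}^2)^{1/2}\sim N^{1/p-1/2}\delta^{(n-1)(1-1/p)}$, so the ratio $\|f\|_{L^p}/(\sum_T\|f_T\|_{L^p}^2)^{1/2}$ is exactly $N^{1/2-1/p}$, forcing $D(\delta,N,p)\gtrsim N^{1/2-1/p}$ and simultaneously showing that a smooth approximation of $\mathbf{1}_{\Nc_\delta(L)}$ is a near-extremizer.

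The main technical point is the formal interpolation step for the upper bound: one must arrange things so that the Fourier-support condition on each $g_T$ is preserved along the analytic interpolation family. This is handled by applying Stein interpolation to the fixed map $U$ acting on Schwartz tuples $(g_T)_T$ with $\widehat{g_T}$ supported in $T$, equivalently by interpolating between the vector-valued $L^p(\R^n;l^2_N)$ spaces; this is routine since the $T$'s do not vary with $p$. The remaining estimates in the lower bound are purely elementary, so combining the two bounds yields $D(\delta,N,p)\sim N^{1/2-1/p}$ and confirms both assertions of the proposition.
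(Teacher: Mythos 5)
Your argument is correct, and it gives the standard ``flat decoupling'' bound; the paper itself presents no proof but merely points to page 7 of \cite{BD4}. Your lower-bound computation with a smooth approximation of $1_{\Nc_\delta(L)}$ is exactly the right example and the calculus is accurate (including the point that the sharp cutoff in the $\xi_1$-direction produces only a $|x_1|^{-1}$ tail, which is harmless in $L^p$ for $p>1$).

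On the upper bound, I would flag two small imprecisions in the interpolation step. First, the map $U:(g_T)\mapsto\sum_T g_T$ is only estimated on the \emph{subspace} of $l^2(L^p)$ tuples subject to the Fourier-support constraint $\supp\widehat{g_T}\subset T$; Riesz--Thorin or Stein interpolation requires the operator to act on the full couple, so you should replace $U$ by $\widetilde U:(g_T)\mapsto\sum_T \Pi_T g_T$ with $\Pi_T$ a smooth Fourier projection equal to $1$ on $T$ and supported on $2T$. Then $\widetilde U$ is globally defined, agrees with $U$ on the constrained tuples, and has the same $p=2$ and $p=\infty$ bounds, after which interpolation is legitimate. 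Second, the relevant mixed-norm space is $l^2(L^p)$, not $L^p(\R^n;l^2_N)$ as you write at the end; these differ for $p\neq 2$ (Minkowski gives $L^p(l^2_N)\hookrightarrow l^2(L^p)$ for $p\geq 2$, which is the wrong direction for your purposes). The correct interpolation identity $[l^2(L^2),l^2(L^\infty)]_\theta=l^2(L^p)$ does hold (Bergh--L\"ofstr\"om, Thm.\ 5.1.2 with both outer exponents equal to $2$), so the conclusion stands. For the record, the argument usually given in \cite{BD4} is somewhat more elementary: it avoids abstract interpolation entirely by working on dual boxes $B$ of dimensions $N\times\delta^{-1}\times\cdots\times\delta^{-1}$, using the pointwise interpolation $\|f\|_{L^p(B)}\leq\|f\|_{L^\infty(B)}^{1-2/p}\|f\|_{L^2(B)}^{2/p}$ together with local $L^2$-orthogonality and the approximate local constancy of each $f_T$ on $B$, then H\"older in $T$ and Minkowski in $B$. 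Both routes reach the same place.
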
 For the proof of Proposition \ref{8.6}, the reader may find pg. 7 of \cite{BD4} to be helpful.

From this proposition, a criterion for optimality naturally follows. Namely, given an element $\tt$ in a decoupling partition $\Pc_\delta$, $\tt$ cannot be partitioned further without significant loss if it is approximately convex. Explicitly, we say that $\tt$ is flat if it contains a convex set $\mathcal{R}$ such that $\tt \subset C\mathcal{R}$ where $C\mathcal{R}$ is the enlargement of $\Rc$ about its center by a constant $C$ that is universal. 

We show that $\tt_k$ as given in Theorem ~\ref{5} satisfies the above criterion. Recall that $\tt_k = \Nc_\delta(\y(\t_k \times [2^{-k}, 2^{-k+1}]))$. We shall work with Frenet coordinates within $\tt_k$, using ~\eqref{frenet}, and obtain $\Rc$ as a polyhedron whose sides essentially follow the Frenet trihedra located at $\tt_{k_i}$.

Consider the orthogonal projections $\pi_a$ of $\R^3$ onto the tangent plane of $\S$ at $\y(a, 2^{-k})$. The Jacobian of $\pi_a \circ \y$ is found using first-order Taylor approximations to be $$\J_{\pi_a \circ \y}(t,s) = \k s + O((t-a))$$ where $\k$ is the curvature function of the curve $\phi$.  Since $\k$ is bounded away from zero and $s$ is larger than $2^{-k}$, $$\J_{\pi_{a} \circ \y}(t,s) \ne 0$$ for all $(t,s) \in \t_{k} \times [2^{-k}, 2^{-k+1}]$, so long as we take $\delta$ sufficiently small as specified in the beginning of the proof of Theorem \ref{5}. 

Let $\t_k = [a, b]$. We shall first determine the base of $\Rc$, which we call $\Rc'$. Let $\Tc$ be the tangent plane of $\S$ at $\y(a, 2^{-k})$. The coordinate system that we use for $\Tc$ places the origin at $\y(a, 2^{-k})$ and has its axes parallel to $\bt = \bt(a)$ and $\n = \n(a)$. Denote $\Sc_1 = (\pi_a \circ \y)(\t_{k} \times \{2^{-k}\}), \Sc_2 = (\pi_a \circ \y)(\t_{k} \times \{2^{-k+1}\}), \Lc_1 = (\pi_a \circ \y)(\{a\} \times [2^{-k}, 2^{-k+1}]),$ and $\Lc_2 = (\pi_a \circ \y)(\{b\} \times [2^{-k}, 2^{-k+1}])$. For $\delta > 0$ sufficiently small, I claim that the region $\Dc$ bounded by $\Sc_1, \Sc_2, \Lc_1,$ and $\Lc_2$ is contained within the image $\Ic = (\pi_a \circ \y)(\t_{k} \times [2^{-k}, 2^{-k+1}])$. Of course, $\Ic \subset \Dc$ since the interior of the former is a connected set intersecting the interior of $\Dc$.

Assume for the purpose of contradiction that $\Dc\backslash \Ic \ne \emptyset$. Then, there exists a maximal open ball $\Bc \subset \Dc \backslash \Ic$, in light of the continuity of $\pi_a \circ \y$. Necessarily, some point $p$ in the boundary of $\Bc$ lies in $\Dc \cap \Ic$. But then, using \eqref{frenet} as a reference, we have a contradiction by the Inverse Function Theorem since $\pi_a \circ \y$ has nonsingular derivative throughout $\t_{k} \times [2^{-k}, 2^{-k+1}]$. 

Two sides of $\Rc'$ are given by $\Lc_1$ and $\Lc_2$. The other two sides may be taken as any two line segments contained in $\Dc$ that orthogonally intersect $\Lc_1$ and that are also separated by $\sim 2^{-k}$. For the completion of $\Rc$, it remains merely to show that $\y(\t_{k} \times [2^{-k}, 2^{-k+1}])$ lies within $O(\delta)$ of $\Tc$. But this is seen from the fact that $\rt^3, s\rt^2 \leq 2\delta$ for all $\rt \in [0, (2^k\delta)^{1/2}], s \in [2^{-k}, 2^{-k+1}]$ with $2^{-k} \geq \delta^{1/3}$.

The demonstration for $\t$ is similar.

\end{document}